\newtheorem{thm}{Theorem}
\newtheorem{lem}[thm]{Lemma}
\newtheorem{cor}[thm]{Corollary}
\newtheorem{prop}[thm]{Proposition}
\newcommand{\arxiv}[1]{\href{https://arxiv.org/abs/#1}{arXiv: #1}}
\newcommand{\R}{{\mathbb R}}
\renewcommand{\S}{{\mathbb S}}
\newcommand{\N}{{\mathbb N}}
\newcommand{\be}[1]{\begin{equation}\label{#1}}
\newcommand{\ee}{\end{equation}}
\renewcommand{\(}{\left(}
\renewcommand{\)}{\right)}
\newcommand{\nrm}[2]{\|{#1}\|_{\mathrm L^{#2}(\S^n)}}
\newcommand{\nrmr}[2]{\left\|{#1}\right\|_{\mathrm L^{#2}(\R^n)}}
\newcommand{\nrmrs}[2]{\left\|{#1}\right\|_{\mathrm L_\star^{#2}(\R^n)}}
\newcommand{\isd}[1]{\int_{\S^n}{#1}\,d\mu}
\newcommand{\ird}[1]{\int_{\R^n}{#1}\,dx}
\begin{document}
\title[Interpolation inequalities and fractional operators on the sphere]{Optimal functional inequalities for fractional operators on the sphere and applications}

\author{Jean Dolbeault}
\address{[J.~Dolbeault] Ceremade, UMR CNRS n$^\circ$~7534, Universit\'e Paris-Dauphine, PSL research university, Place de Lattre de Tassigny, 75775 Paris 16, France.}
\email{\href{mailto:dolbeaul@ceremade.dauphine.fr}{dolbeaul@ceremade.dauphine.fr}}
\author{An Zhang}
\address{[A.~Zhang] Ceremade, UMR CNRS n$^\circ$~7534, Universit\'e Paris-Dauphine, PSL research university, Place de Lattre de Tassigny, 75775 Paris 16, France.}
\email{\href{mailto:zhang@ceremade.dauphine.fr}{zhang@ceremade.dauphine.fr}}
\date{\today}
\begin{abstract} This paper is devoted to the family of optimal functional inequalities on the $n$-dimensional sphere $\S^n$, namely
\[
\frac{\nrm Fq^2-\nrm F2^2}{q-2}\le\mathsf C_{q,s}\isd{F\,\mathcal L_s\kern0.5pt F}\quad\forall\,F\in\mathrm H^{s/2}(\S^n)
\]
where $\mathcal L_s$ denotes a fractional Laplace operator of order $s\in(0,n)$, $q\in[1,2)\cup(2,q_\star]$, $q_\star=2\,n/(n-s)$ is a critical exponent and $d\mu$ is the uniform probability measure on $\S^n$. These inequalities are established with optimal constants using spectral properties of fractional operators. Their consequences for fractional heat flows are considered. If $q>2$, these inequalities interpolate between fractional Sobolev and subcritical fractional logarithmic Sobolev inequalities, which correspond to the limit case as $q\to2$. For $q<2$, the inequalities interpolate between fractional logarithmic Sobolev and fractional Poincar\'e inequalities. In the subcritical range $q<q_\star$, the method also provides us with remainder terms which can be considered as an improved version of the optimal inequalities. The case $s\in(-n,0)$ is also considered. Finally, weighted inequalities involving the fractional Laplacian are obtained in the Euclidean space, using a stereographic projection.

\end{abstract}
\keywords{Hardy-Littlewood-Sobolev inequality; fractional Sobolev inequality; fractional logarithmic Sobolev inequality; spectral gap; fractional Poincar\'e inequality; fractional heat flow; subcritical interpolation inequalities on the sphere; stereographic projection}
\subjclass[2010]{
26D15; 35A23; 35R11;
Secondary: 26D10; 26A33; 35B33}
\maketitle
\thispagestyle{empty}
\vspace*{-0.8cm}

\section{Introduction and main results}\label{Sec:Intro}

Let us consider the unit sphere $\S^n$ with $n\ge1$ and assume that the measure $d\mu$ is the uniform probability measure, which is also the measure induced on $\S^n$ by Lebesgue's measure on $\R^{n+1}$, up to a normalization constant. With $\lambda\in(0,n)$, $p=\frac{2\,n}{2\,n-\lambda}\in(1,2)$ or equivalently $\lambda=\frac{2\,n}{p'}$ where $\frac1p+\frac1{p'}=1$, according to~\cite{MR717827}, the sharp \emph{Hardy-Littlewood-Sobolev inequality on~$\S^n$} reads
\be{HLS}
\iint_{\S^n\times\S^n}F(\zeta)\,|\zeta-\eta|^{-\lambda}\,F(\eta)\,d\mu(\zeta)\,d\mu(\eta)\le\frac{\Gamma(n)\,\Gamma\big(\tfrac {n-\lambda}2\big)}{2^\lambda\,\Gamma\big(\tfrac n2\big)\,\Gamma\big(\tfrac np\big)}\,\nrm Fp^2\,.
\ee
For the convenience of the reader, the definitions of all parameters, their ranges and their relations have been collected in Appendix~\ref{Appendix:Notations}.

By the Funk-Hecke formula, the left-hand side of the inequality can be written as
\begin{multline}\label{FH}
\iint_{\S^n\times\S^n}F(\zeta)\,|\zeta-\eta|^{-\lambda}\,F(\eta)\,d\mu(\zeta)\,d\mu(\eta)\\
=\frac{\Gamma(n)\,\Gamma\big(\tfrac {n-\lambda}2\big)}{2^\lambda\,\Gamma\big(\tfrac n2\big)\,\Gamma\big(\tfrac np\big)}\,
\sum_{k=0}^\infty\,\frac{\Gamma(\tfrac np)\,\Gamma(\tfrac n{p'}+k)}{\Gamma(\tfrac n{p'})\,\Gamma(\tfrac np+k)}\isd{|F_{(k)}|^2}
\end{multline}
where $F=\sum_{k=0}^\infty F_{(k)}$ is a decomposition on spherical harmonics, so that $F_{(k)}$ is a spherical harmonic function of degree $k$. See~\cite[Section~4]{MR2848628} for details on the computations and, \emph{e.g.},~\cite{MR0199449} for further related results. With the above representation, inequality~\eqref{HLS} is equivalent to
\be{HLS2}
\sum_{k=0}^\infty\,\frac{\Gamma(\tfrac np)\,\Gamma(\tfrac n{p'}+k)}{\Gamma(\tfrac n{p'})\,\Gamma(\tfrac np+k)}\isd{|F_{(k)}|^2}\le\nrm Fp^2\,.
\ee
By duality, with $q_\star=q_\star(s)$ defined by
\be{CriticalExponent}
q_\star=\frac{2\,n}{n-s}
\ee
or equivalently $s=n\,(1-2/q_\star)$, we obtain the \emph{fractional Sobolev inequality on~$\S^n$}
\be{Sobolev}
\nrm F{q_\star}^2\le\isd{F\,\mathcal K_s\kern0.5pt F}\quad\forall\,F\in\mathrm H^{s/2}(\S^n)
\ee
for any $s\in(0,n)$, where
\be{gamma}
\isd{F\,\mathcal K_s\kern0.5pt F}:=\sum_{k=0}^\infty\gamma_k\big(\tfrac n{q_\star}\big)\isd{|F_{(k)}|^2}
\ee
and
\[
\gamma_k(x):=\frac{\Gamma(x)\,\Gamma(n-x+k)}{\Gamma(n-x)\,\Gamma(x+k)}\,.
\]
With $s\in(0,n)$, the exponent $q_\star$ is in the range $(2,\infty)$. Inequalities~\eqref{HLS} and~\eqref{Sobolev} are related by $q_\star=p'$ so that
\[
p=\frac{2\,n}{n+s}\quad\mbox{and}\quad\lambda=n-s\,.
\]
We shall refer to $q=q_\star(s)$ given by~\eqref{CriticalExponent} as the \emph{critical case} and our purpose is to study the whole range of the \emph{subcritical interpolation inequalities}
\be{interpolation}
\frac{\nrm Fq^2-\nrm F2^2}{q-2}\le\mathsf C_{q,s}\isd{F\,\mathcal L_s\kern0.5pt F}\quad\forall\,F\in\mathrm H^{s/2}(\S^n)
\ee
for any $q\in[1,2)\cup(2,q_\star]$, where
\[
\mathcal L_s\kern0.5pt:=\frac1{\kappa_{n,s}}\(\mathcal K_s-\mathrm{Id}\)\quad\mbox{with}\quad\kappa_{n,s}:=\frac{\Gamma\big(\frac n{q_\star}\big)}{\Gamma\big(n-\frac n{q_\star}\big)}=\frac{\Gamma\big(\frac{n-s}2\big)}{\Gamma\big(\frac{n+s}2\big)}\,.
\]

If $q=q_\star$,~inequalities \eqref{Sobolev} and~\eqref{interpolation} are identical, the optimal constant in~\eqref{interpolation} is $\mathsf C_{q_\star,s}=\frac{\kappa_{n,s}}{q_\star-2}$, and we recall that~\eqref{Sobolev} is equivalent to the fractional Sobolev inequality on the Euclidean space (see the proof of Theorem~\ref{Thm:Main2} in Section~\ref{Sec:Euclidean} for details). The usual conformal fractional Laplacian is defined~by
\[
\mathcal A_s:=\frac1{\kappa_{n,s}}\,\mathcal K_s=\mathcal L_s+\frac1{\kappa_{n,s}}\,\mathrm{Id}\,.
\]
For brevity, we shall say that $\mathcal L_s$ is the \emph{fractional Laplacian} of order $s$, or simply the \emph{fractional Laplacian}.

We observe that $\gamma_0(n/q)-1=0$ and $\gamma_1(n/q)-1=q-2$. A straightforward computation gives
\[
\isd{F\,\mathcal L_s\kern0.5pt F}:=\sum_{k=1}^\infty\delta_k\big(\tfrac n{q_\star}\big)\isd{|F_{(k)}|^2}
\]
where the spectrum of $\mathcal L_s$ is given by
\[
\delta_k(x):=\frac{\Gamma(n-x+k)}{\Gamma(x+k)}-\frac{\Gamma(n-x)}{\Gamma(x)}\,.
\]
The case corresponding to $s=2$ and $n\ge3$, where
\[
\frac1{\kappa_{n,2}}=\frac14\,n\,(n-2), \quad \mathcal L_2=-\Delta,\quad \mathcal A_2=-\Delta+\frac14\,n\,(n-2)
\]
and $\Delta$ stands for the \emph{Laplace-Beltrami operator} on $\S^n$, has been considered by W.~Beckner: in~\cite[page~233,~(35)]{MR1230930} he observed that
\[
\delta_k\big(\tfrac nq\big)\le\delta_k\big(\tfrac n{q_\star}\big)=k\,(k+n-1)
\]
if $q\in(2,q_\star(2)]$, where $q_\star=q_\star(2)=2\,n/(n-2)$ and $(k\,(k+n-1))_{k\in\N}$ is the sequence of the eigenvalues of $-\Delta$ according to, \emph{e.g.},~\cite{MR0282313}. This establishes the interpolation inequality
\be{NonFractional}
\nrm Fq^2-\nrm F2^2\le\frac{q-2}n\,\nrm{\nabla F}2^2\quad\forall\,F\in\mathrm H^1(\S^n)
\ee
where $\mathsf C_{q,2}=1/n$ is the optimal constant: see \cite[(35),~Theorem~4]{MR1230930} for details. An earlier proof of the inequality with optimal constant can be found in \cite[Corollary~6.2]{BV-V}, with a proof based on \emph{rigidity} results for elliptic partial differential equations. Our main result generalizes the interpolation inequalities~\eqref{NonFractional} to the case of the fractional operators $\mathcal L_s$, and relies on W.~Beckner's approach. In particular, as in~\cite{MR1230930}, we characterize the optimal constant $\mathsf C_{q,s}$ in~\eqref{interpolation} using a spectral gap property.

After dividing both sides of~\eqref{NonFractional} by $(q-2)$ we obtain an inequality which, for $s=2$, also makes sense for any $q\in[1,2)$. When $q=1$, this is actually a variant of the Poincar\'e inequality (or, to be precise, the Poincar\'e inequality written for $|F|$), and the range $q>1$ has been studied using the \emph{carr\'e du champ} method, also known as the $\Gamma_2$ calculus, by D.~Bakry and M.~Emery in~\cite{MR889476}. Actually their method covers the range corresponding to $1\le q<\infty$ if $n=1$ and
\[1\le q\le2^\#:=\frac{2\,n^2+1}{(n-1)^2}\quad \text{if} ~\, n\ge2.\]
In the special case $q=2$, the left-hand side of \eqref{NonFractional} has to be replaced by the entropy
\[\isd{F^2\,\log\(\frac{F^2}{\nrm F2^2}\)}\,.\] Still under the condition that $s=2$, the whole range $1\le q<\infty$ when $n=2$, and $1\le q\le 2\,n/(n-2)$ if $n\ge3$ can be covered using nonlinear flows as shown in~\cite{MR2381156,MR3229793,dolbeault:hal-01206975}.

\medskip All these considerations motivate our first result, which generalizes known results for $\mathcal L_2=-\Delta$ to the case of the \emph{fractional Laplacian}~$\mathcal L_s$.
\begin{thm}\label{Thm:Main1} Let $n\ge1$, $s\in(0,n]$, $q\in[1,2)\cup(2,q_\star]$, with $q_\star$ given by \eqref{CriticalExponent}, if $s<n$, and $q\in[1,2)\cup(2,\infty)$ if $s=n$. Inequality~\eqref{interpolation} holds with sharp constant
\[
\mathsf C_{q,s}=\frac{n-s}{2\,s}\,\frac{\Gamma\big(\frac{n-s}2\big)}{\Gamma\big(\frac{n+s}2\big)}\,.
\]
\end{thm}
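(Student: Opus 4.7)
My strategy follows Beckner's approach~\cite{MR1230930}: the sharp constant in~\eqref{interpolation} is the reciprocal of the spectral gap of $\mathcal L_s$, the critical endpoint $q=q_\star$ comes from the dual Hardy-Littlewood-Sobolev inequality~\eqref{HLS2} via~\eqref{FH}, and the subcritical range is handled by a spectral interpolation preserving this constant. I first identify the constant by testing~\eqref{interpolation} on $F=1+\varepsilon\,Y_1$ with $Y_1$ a spherical harmonic of degree one, $\isd{Y_1}=0$. A Taylor expansion gives $\nrm F q^2=1+(q-1)\,\varepsilon^2\,\nrm{Y_1}2^2+O(\varepsilon^3)$, so the left-hand side equals $\varepsilon^2\,\nrm{Y_1}2^2+O(\varepsilon^3)$ while the right-hand side equals $\mathsf C_{q,s}\,\delta_1(n/q_\star)\,\varepsilon^2\,\nrm{Y_1}2^2+O(\varepsilon^3)$. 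The formula $\delta_1(x)=\tfrac{n-2x}{x}\,\tfrac{\Gamma(n-x)}{\Gamma(x)}$ at $x=(n-s)/2$ yields $\delta_1(n/q_\star)=\tfrac{2s}{(n-s)\,\kappa_{n,s}}$, forcing $\mathsf C_{q,s}\geq\tfrac{n-s}{2s}\,\kappa_{n,s}$, the claimed value --- and in particular independent of $q$.

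For the critical case $q=q_\star$, the identity~\eqref{HLS2} (equivalently the fractional Sobolev inequality~\eqref{Sobolev}) reads $\nrm F{q_\star}^2\leq\sum_{k\geq 0}\gamma_k(n/q_\star)\,\isd{|F_{(k)}|^2}$. Subtracting $\nrm F 2^2=\sum_{k\geq 0}\isd{|F_{(k)}|^2}$, using $\gamma_0(x)=1$ together with the identity $\gamma_k(x)-1=\kappa_{n,s}\,\delta_k(x)$ at $x=n/q_\star$, gives $\nrm F{q_\star}^2-\nrm F 2^2\leq\kappa_{n,s}\,\isd{F\,\mathcal L_s F}$. Dividing by $q_\star-2=2s/(n-s)$ recovers~\eqref{interpolation} with the sharp constant.

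For the subcritical range $q\in[1,2)\cup(2,q_\star)$, one interpolates between the critical case and the spectral-gap endpoint. The endpoint $q=1$ is especially clean: for $F\geq 0$ one has $\nrm F 1=\isd{F}=:\bar F$ and $(\nrm F 1^2-\nrm F 2^2)/(1-2)=\nrm{F-\bar F}2^2$, so~\eqref{interpolation} reduces to the Poincar\'e inequality $\nrm{F-\bar F}2^2\leq\mathsf C_{1,s}\,\isd{F\,\mathcal L_s F}$, which follows from $\delta_k(n/q_\star)\geq\delta_1(n/q_\star)=1/\mathsf C_{q,s}$ for $k\geq 1$ (a monotonicity consequence of the $\Gamma$-function ratios defining $\delta_k$). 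For intermediate $q$, I would follow Beckner's spectral interpolation: bound $\nrm F q^2-\nrm F 2^2$ (using the same Funk-Hecke-type structure as at $q_\star$) by a weighted spectral sum that, after division by $q-2$, is dominated by $\mathsf C_{q,s}\sum_{k\geq1}\delta_k(n/q_\star)\,\nrm{F_{(k)}}2^2$. This reduces matters to a family of sharp one-dimensional comparisons of the eigenvalue ratios $(\gamma_k(n/q)-1)/(q-2)$ against $\kappa_{n,s}\,\delta_k(n/q_\star)/(q_\star-2)$ for each $k\geq 1$, the $k=1$ case holding with equality by construction of $\mathsf C_{q,s}$.

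The main obstacle is producing the \emph{sharp} constant in the subcritical range: $\nrm F q^2$ is not diagonal in the spherical-harmonic basis when $q\neq 2$, so the termwise reduction above is not literal. A naive H\"older interpolation $\nrm F q^2\leq\theta\,\nrm F{q_\star}^2+(1-\theta)\,\nrm F 2^2$ (with $1/q=\theta/q_\star+(1-\theta)/2$, $F\geq 0$) combined with the critical case yields~\eqref{interpolation} only with the suboptimal constant $(q_\star/q)\,\mathsf C_{q,s}$, overshooting by the factor $q_\star/q>1$. Recovering the sharp value requires the finer spectral comparison exploiting the exact structure of the eigenvalues $\gamma_k(n/q)$ as functions of $q$, rather than the coarser bound of $\gamma_k$ by its value at $q=q_\star$. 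This is where the bulk of the technical effort is concentrated, and it is precisely the spectral-gap characterization announced in the introduction that makes the sharp constant independent of $q$ throughout the whole subcritical range.
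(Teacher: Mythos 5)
Your overall strategy coincides with the paper's: linearize around constants to get the lower bound $\mathsf C_{q,s}\ge\kappa_{n,s}/(q_\star-2)$, observe that the critical case is exactly \eqref{Sobolev}, and reduce the subcritical case to a termwise comparison of the quotients $(\gamma_k(n/q)-1)/(q-2)$ with their values at $q=q_\star$. But there is a genuine gap: you state that this comparison is ``where the bulk of the technical effort is concentrated'' and you never carry it out. That comparison is precisely the content of the paper's Lemma~\ref{Lem:MonotonicityGamma}: for every $k\ge2$ the map $q\mapsto(\gamma_k(n/q)-1)/(q-2)$ is increasing on $(1,\infty)$, which the paper deduces from the strict convexity of $q\mapsto\gamma_k(n/q)$ together with $\gamma_k(n/2)=1$ (the quotient is then a chord slope of a convex function through the fixed point at $q=2$). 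Proving that convexity is the real work: in the variable $x=n/q$ it amounts to $x\,\gamma_k''+2\,\gamma_k'>0$, i.e.\ $\alpha_k^2-\alpha_k'-\tfrac2x\,\alpha_k>0$ with $\alpha_k=-\gamma_k'/\gamma_k=\sum_{j=0}^{k-1}\beta_j$, and it is established by the exact cancellation $\beta_0^2-\beta_0'-\tfrac2x\,\beta_0=0$ plus an explicit positive lower bound for $2\,\beta_0\,\beta_j+\beta_j^2-\beta_j'-\tfrac2x\,\beta_j$ when $j\ge1$. Without this (or an equivalent argument) the theorem is only proved at the two endpoints $q=1$ and $q=q_\star$, and your H\"older interpolation fallback, as you yourself note, loses the factor $q_\star/q$.

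A secondary point: your stated obstacle that ``$\nrm Fq^2$ is not diagonal in the spherical-harmonic basis'' is not actually an obstacle in this scheme. The diagonal upper bound $\nrm Fq^2\le\sum_k\gamma_k(n/q)\isd{|F_{(k)}|^2}$ for $q\in(2,q_\star]$ is just \eqref{Sobolev} written at the exponent $q$ itself (i.e.\ with $s$ replaced by $n(1-2/q)$), and for $q\in(1,2)$ the reversed diagonal bound is \eqref{HLS2}, which after division by the negative quantity $q-2$ points in the right direction. So the first step of the termwise reduction is literal; only the eigenvalue monotonicity remains, and that is what you must supply.
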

With our previous notations, this amounts to $\mathsf C_{q,s}=\frac{\kappa_{n,s}}{q_\star-2}=\frac{n-s}{2\,s}\,\kappa_{n,s}$. Remarkably, $\mathsf C_{q,s}$ is independent of $q$. Equality in~\eqref{interpolation} is achieved by constant functions. The issue of the optimality of $\mathsf C_{q,s}$ is henceforth somewhat subtle. If we define the functional
\be{Q}
\mathcal Q[F]:=\frac{(q-2)\isd{F\,\mathcal L_s\kern0.5pt F}}{\nrm Fq^2-\nrm F2^2}
\ee
on the subset $\mathscr H^{s/2}$ of the functions in $\mathrm H^{s/2}(\S^n)$ which are not almost everywhere constant, then $\mathsf C_{q,s}$ can be characterized by
\[
\mathsf C_{q,s}^{-1}=\inf_{F\in\mathscr H^{s/2}}\mathcal Q[F]\,.
\]
This minimization problem will be discussed in Section~\ref{Sec:Conclusion}.

Our key estimate is a simple convexity observation that is stated in Lemma~\ref{Lem:MonotonicityGamma}. The optimality in~\eqref{interpolation} is obtained by performing a linearization, which corresponds to an asymptotic regime as we shall see in Section~\ref{SubSec:Poincare}. Technically, this is the reason why we are able to identify the optimal constant. The asymptotic regime can be investigated using a flow. Indeed, a first consequence of Theorem~\ref{Thm:Main1} is that we may apply entropy methods to the generalized fractional heat flow
\be{FHeatFlow}
\frac{\partial u}{\partial t}-q\,\nabla\cdot\(u^{1-\frac1q}\,\nabla(-\Delta)^{-1}\,\mathcal L_s\kern0.5pt u^\frac1q\)=0\,.
\ee
Notice that~\eqref{FHeatFlow} is a $1$-homogeneous equation, but that it is nonlinear when $q\neq1$ and $s\neq2$. Let us define a \emph{generalized entropy} by
\[
\mathcal E_q[u]:=\frac1{q-2}\left[\;\(\isd u\)^\frac2q-\isd{u^\frac2q}\right]\,.
\]
It is straightforward to check that for any positive solution to~\eqref{FHeatFlow} which is smooth enough and has sufficient decay properties as $|x|\to+\infty$, we have
\[
\frac d{dt}\mathcal E_q[u(t,\cdot)]=-\,2\isd{\nabla u^\frac1q\cdot\nabla(-\Delta)^{-1}\,\mathcal L_s\kern0.5pt u^\frac1q}=-\,2\isd{u^\frac1q\,\mathcal L_s\kern0.5pt u^\frac1q}\,,
\]
so that by applying~\eqref{interpolation} to $F=u^{1/q}$ we obtain the exponential decay of $\mathcal E_q[u(t,\cdot)]$.
\begin{cor}\label{Cor:FHeatFlow} Let $n\ge1$, $s\in(0,n]$, $q\in[1,2)\cup(2,q_\star]$ if $s<n$, with $q_\star$ given by \eqref{CriticalExponent}, and $q\in[1,2)\cup(2,\infty)$ if $s=n$. If $u$ is a positive function in $C^1(\R^+;\mathrm L^\infty(\S^n))$ such that $u^{1/q}\in C^1(\R^+;\mathrm H^{s/2}(\S^n))$ and if $u$ solves~\eqref{FHeatFlow} on $\S^n$ with initial datum~$u_0>0$, then
\[
\mathcal E_q[u(t,\cdot)]\le\mathcal E_q[u_0]\,e^{-\,2\,\mathsf C_{q,s}^{-1}\,t} \quad\forall\,t\ge0\,.
\]\end{cor}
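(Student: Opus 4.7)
The plan is the standard entropy-dissipation approach: compute $\frac d{dt}\mathcal E_q[u(t,\cdot)]$ along~\eqref{FHeatFlow}, bound it from below by a multiple of $\mathcal E_q[u(t,\cdot)]$ itself using Theorem~\ref{Thm:Main1}, and conclude with Gr\"onwall's lemma. The paragraph preceding the corollary already announces the central identity
\[
\frac d{dt}\mathcal E_q[u(t,\cdot)]=-\,2\isd{u^{1/q}\,\mathcal L_s\kern0.5pt u^{1/q}}\,,
\]
and I would spend a few lines establishing it rigorously. Mass conservation $\frac d{dt}\isd u=0$ (obtained by integrating~\eqref{FHeatFlow} over the closed manifold~$\S^n$) kills the contribution of $\(\isd u\)^{2/q}$. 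For the remaining piece, one integration by parts yields
\[
\isd{u^{2/q-1}\,u_t}=-\,q\isd{\nabla u^{2/q-1}\cdot u^{1-1/q}\,\nabla(-\Delta)^{-1}\mathcal L_s\kern0.5pt u^{1/q}}\,,
\]
and the algebraic identity $\nabla u^{2/q-1}\cdot u^{1-1/q}=(2-q)\,\nabla u^{1/q}$, combined with $-\Delta\circ(-\Delta)^{-1}=\mathrm{Id}$ on zero-mean functions (applicable since $\delta_0=0$ so that $\mathcal L_s u^{1/q}$ has zero mean), collapses the right-hand side to $q\,(q-2)\int_{\S^n}u^{1/q}\,\mathcal L_s u^{1/q}\,d\mu$. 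Multiplying by the prefactor $-2/(q\,(q-2))$ coming from the chain rule produces the announced identity.

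Next I would apply Theorem~\ref{Thm:Main1} to $F=u^{1/q}$. Since
\[
\nrm Fq^2=\(\isd u\)^{2/q}\quad\text{and}\quad\nrm F2^2=\isd{u^{2/q}}\,,
\]
the left-hand side of~\eqref{interpolation} is precisely $\mathcal E_q[u]$, so~\eqref{interpolation} rewrites as
\[
\mathcal E_q[u(t,\cdot)]\le\mathsf C_{q,s}\isd{u^{1/q}\,\mathcal L_s\kern0.5pt u^{1/q}}=-\,\frac{\mathsf C_{q,s}}2\,\frac d{dt}\mathcal E_q[u(t,\cdot)]\,.
\]
This gives the closed differential inequality $\frac d{dt}\mathcal E_q[u(t,\cdot)]\le-\,2\,\mathsf C_{q,s}^{-1}\,\mathcal E_q[u(t,\cdot)]$, and Gr\"onwall's lemma delivers the claimed exponential decay.

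The regularity assumed in the statement ($u>0$, $u\in C^1(\R^+;\mathrm L^\infty(\S^n))$, and $u^{1/q}\in C^1(\R^+;\mathrm H^{s/2}(\S^n))$) is exactly what is needed to legitimize each integration by parts and to make $(-\Delta)^{-1}\mathcal L_s u^{1/q}$ meaningful, so once Theorem~\ref{Thm:Main1} is in hand there is no substantive analytic obstacle---the proof is essentially a chain rule followed by Gr\"onwall. The one bookkeeping point I would treat carefully is the sign across the $q=2$ threshold: for $q\in[1,2)$ the factor $1/(q-2)$ is negative and $\nrm Fq^2-\nrm F2^2\le0$, but the ratio, and therefore both $\mathcal E_q[u]$ and the dissipation, stay nonnegative, so the argument runs uniformly in $q$ with no case distinction.
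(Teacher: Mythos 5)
Your proposal is correct and follows essentially the same route as the paper: differentiate $\mathcal E_q$ along the flow, use mass conservation and two integrations by parts to obtain $\frac d{dt}\mathcal E_q[u(t,\cdot)]=-\,2\isd{u^{1/q}\,\mathcal L_s\kern0.5pt u^{1/q}}$, apply~\eqref{interpolation} with $F=u^{1/q}$, and conclude by Gr\"onwall's lemma. Your additional remarks on the sign of $\mathcal E_q$ for $q<2$ and on the role of the zero-mean property of $\mathcal L_s u^{1/q}$ are accurate and consistent with the paper's (sketched) argument.
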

The exponential rate is determined by the asymptotic regime as $t\to+\infty$. The value of \emph{the optimal constant} $\mathsf C_{q,s}$ is indeed determined by the \emph{spectral gap of the linearized problem} around non-zero constant functions. From the expression of~\eqref{FHeatFlow}, which is not even a linear equation whenever $s\neq2$, we observe that the interplay of optimal fractional inequalities and fractional diffusion flows is not straightforward, while for $s=2$, the generalized entropy $\mathcal E_q$ enters in the framework of the so-called $\varphi$-entropies and is well understood in terms of gradient flows: see for instance~\cite{MR1842428,MR2081075,MR2448650}. When $s=2$, it is also known from~\cite{MR889476} that heat flows can be used in the framework of the \emph{carr\'e du champ} method to establish the inequalities at least for exponents in the range $q\le2^\#$ if $n\ge2$, and that the whole subcritical range of exponents can be covered using nonlinear diffusions as in~\cite{MR2381156,MR3229793,dolbeault:hal-01206975} (and also the critical exponent if $n\ge3$). Even better, \emph{rigidity} results, that is, uniqueness of positive solutions (which are therefore constant functions) follow by this technique. So far there is no analogue in the case of fractional operators, except for one example found in~\cite{MR3279352} when $n=1$.

When $s=2$, the \emph{carr\'e du champ} method provides us with an integral remainder term and, as a consequence, with an improved version of~\eqref{interpolation}. As we shall see, our proof of~Theorem~\ref{Thm:Main1} establishes another improved inequality, by construction: see Corollary~\ref{Cor:improvedInterpolation}. This also suggests another direction, which is more connected with the duality that relates~\eqref{HLS} and~\eqref{Sobolev}. Let us describe the main idea. The operator $\mathcal K_s$ is positive definite and we can henceforth consider $\mathcal K_s^{1/2}$ and $\mathcal K_s^{-1}$. Moreover, using~\eqref{FH} and~\eqref{gamma}, we know that
\[
\iint_{\S^n\times\S^n}G(\zeta)\,|\zeta-\eta|^{-\lambda}\,G(\eta)\,d\mu(\zeta)\,d\mu(\eta)=\frac{\Gamma(n)\,\Gamma(\frac s2)}{2^\lambda\,\Gamma(\tfrac n2)\,\Gamma(n+\frac s2)}\isd{G\,\mathcal K_s^{-1}\,G}\,.
\]
Expanding the square \[\isd{\big|\mathcal K_s^{1/2}\kern0.5pt F-\mathcal K_s^{-1/2}\kern0.5pt G\big|^2}\] with $G=F^{q_\star-1}$ so that $F\,G=F^{q_\star}=G^p$ where $q_\star$ and $p$ are H\"older conjugates, we get a comparison of the difference of the two terms which show up in~\eqref{HLS} and~\eqref{Sobolev} and, as a result, an \emph{improved fractional Sobolev inequality on~$\S^n$}. The reader interested in the details of the proof is invited to consult~\cite{MR3227280} for a similar result.
\begin{prop}\label{Prop:Square} Let $n\ge 1$ and $s\in(0,n)$. Consider $q_\star$ given by~\eqref{CriticalExponent}, $p=q_\star'=\frac{2\,n}{n+s}$ and $\lambda=n-s$. For any $F\in\mathrm H^{s/2}(\S^n)$, if $G=F^{q_\star-1}$, then
\begin{align*}
\nrm Gp^2-2^{\lambda}\,\frac{\Gamma(\frac n2)\,\Gamma(n+\tfrac s2)}{\Gamma(n)\,\Gamma(\tfrac s2)}\,
\iint_{\S^n\times\S^n}G(\zeta)\,|\zeta-\eta|^{-\lambda}\,G(\eta)\,d\mu(\zeta)\,d\mu(\eta)\\
\le\nrm F{q_\star}^{2\kern0.5pt(q_\star-2)}\,\(\isd{F\,\mathcal K_s\kern0.5pt F}-\nrm F{q_\star}^2\)\,.
\end{align*}
\end{prop}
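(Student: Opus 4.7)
The strategy is the one sketched just before the statement: for a scaling parameter $\alpha>0$ to be chosen later, consider the nonnegative quantity
\[
0\le\isd{\bigl|\mathcal K_s^{1/2}F-\alpha\,\mathcal K_s^{-1/2}G\bigr|^2}.
\]
The operator $\mathcal K_s$ is positive and self-adjoint on $\mathrm L^2(\S^n)$ (its eigenvalues $\gamma_k(n/q_\star)$ are positive, as~\eqref{gamma} shows), so $\mathcal K_s^{\pm 1/2}$ are unambiguously defined via the spherical harmonics decomposition. Expanding the square and using self-adjointness yields
\[
2\,\alpha\isd{FG}\le\isd{F\,\mathcal K_s\kern0.5pt F}+\alpha^2\isd{G\,\mathcal K_s^{-1}\,G}.
\]

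The algebraic identities linking $F$ and $G=F^{q_\star-1}$ come from the H\"older conjugacy $p\,(q_\star-1)=q_\star$: one has $F\,G=F^{q_\star}=G^p$, and hence
\[
\isd{FG}=\nrm F{q_\star}^{q_\star}\quad\text{and}\quad\nrm Gp^2=\nrm F{q_\star}^{2(q_\star-1)}=\nrm F{q_\star}^{2(q_\star-2)}\,\nrm F{q_\star}^2.
\]
The natural choice $\alpha=\nrm F{q_\star}^{-(q_\star-2)}$ makes the linear term reproduce exactly $\nrm F{q_\star}^2$; multiplying the displayed inequality by $\nrm F{q_\star}^{2(q_\star-2)}$ transforms it into
\[
2\,\nrm Gp^2\le\nrm F{q_\star}^{2(q_\star-2)}\isd{F\,\mathcal K_s\kern0.5pt F}+\isd{G\,\mathcal K_s^{-1}\,G}.
\]
Subtracting $\nrm Gp^2=\nrm F{q_\star}^{2(q_\star-2)}\,\nrm F{q_\star}^2$ from both sides, then rewriting $\isd{G\,\mathcal K_s^{-1}\,G}$ as the Hardy--Littlewood--Sobolev double integral through the identity recalled just before the statement produces precisely the announced inequality.

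There is no serious obstacle: the whole argument is a single expansion of a square. The only subtle point is the choice of the scaling constant $\alpha$, tuned so that the cross term reproduces the quantity $\nrm F{q_\star}^2$ that has to appear on the right-hand side of the target inequality; any other value of $\alpha$ would give a genuinely weaker statement. Finiteness of all the quantities involved is automatic: $F\in\mathrm H^{s/2}(\S^n)$ embeds into $\mathrm L^{q_\star}(\S^n)$ by the fractional Sobolev inequality~\eqref{Sobolev}, whence $G\in\mathrm L^p(\S^n)$, and the Hardy--Littlewood--Sobolev inequality~\eqref{HLS} then guarantees that $\isd{G\,\mathcal K_s^{-1}\,G}$ is finite, so that the expansion of the square is fully justified.
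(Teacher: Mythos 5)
Your argument is correct and is exactly the expansion-of-the-square strategy that the paper sketches before the statement (and delegates to \cite{MR3227280} for details); the only cosmetic difference is that you introduce the scaling parameter $\alpha=\nrm F{q_\star}^{-(q_\star-2)}$, which is equivalent to normalizing $\nrm F{q_\star}=1$ and invoking the degree-$2(q_\star-1)$ homogeneity of both sides. The identities $\isd{FG}=\nrm F{q_\star}^{q_\star}$, $\nrm Gp^2=\nrm F{q_\star}^{2(q_\star-1)}$, and the conversion of $\isd{G\,\mathcal K_s^{-1}G}$ into the kernel integral all check out, so nothing further is needed.
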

Still in the critical case $q=q_\star$, by using the \emph{fractional Yamabe flow} and taking inspiration from~\cite{1101,MR3227280,MR3276166,2014arXiv1404.1028J,MR3429269}, it is possible to give improvements of the above inequality and in particular improve on the constant which relates the left- and the right-hand sides of the inequality in Proposition~\ref{Prop:Square}. We will not go further in this direction because of the delicate regularity properties of the fractional Yamabe flow and because, so far the method does not allow to characterize the best constant in the improvement. Let us mention that, in the critical case $q=q_\star$, further estimates of Bianchi-Egnell type have also been obtained in~\cite{MR3179693,MR3429269} for fractional operators. In this paper, we shall rather focus on the subcritical range. It is however clear that there is still space for further improvements, or alternative proofs of~\eqref{Sobolev} which rely neither on rearrangements as in~\cite{MR717827} nor on inversion symmetry as in~\cite{MR2659680,MR2858468,MR2848628}, for the simple reason that our method fails to provide us with a proof of the Bianchi-Egnell estimates in the critical case.

For completeness let us quote a few other related results. Symmetrization techniques and the method of competing symmetries are both very useful to identify the optimal functions: the interested reader is invited to refer to~\cite{MR1817225} and~\cite{MR1038450}, respectively, when $s=2$. In this paper, we shall use notations inspired by~\cite{MR1230930}, but at this point it is worth mentioning that in~\cite{MR1230930} the emphasis is put on logarithmic Hardy-Littlewood-Sobolev inequalities and their dual counterparts, which are $n$-dimensional versions of the Moser-Trudinger-Onofri inequalities. Some of these results were obtained simultaneously in~\cite{MR1143664} with some additional insight on optimal functions gained from rearrangements and from the method of competing symmetries. Concerning observations on duality, we refer to the introduction of~\cite{MR1143664}, which clearly refers the earlier contributions of various authors in this area. For more recent considerations on $n$-dimensional Moser-Trudinger-Onofri inequalities, see, \emph{e.g.},~\cite{delPino26062012}.

Section~\ref{Sec:Interpolation} is devoted to the proof of Theorem~\ref{Thm:Main1}. As already said, we shall take advantage of the subcritical range to obtain remainder terms and improved inequalities. Improvements in the subcritical range have been obtained in the case of non-fractional interpolation inequalities in the context of fast diffusion equations in~\cite{Dolbeault2013917,Dolbeault15052015}. In this paper we shall simply take into account the terms which appear by difference in the proof of Theorem~\ref{Thm:Main1}: see Corollary~\ref{Cor:improvedInterpolation} in Section~\ref{SubSec:Remainder}. Although this approach does not provide us with an alternative proof of the optimality of the constant $\mathsf C_{q,s}$ in~\eqref{interpolation}, variational methods will be applied in Section~\ref{Sec:Conclusion} in order to explain \emph{a posteriori} why the value of the optimal value of $\mathsf C_{q,s}$ is determined by the spectral gap of a linearized problem. Some useful information on the spectrum of~$\mathcal L_s$ is detailed in Appendix~\ref{Appendix:Spectrum}.

\medskip Our next result is devoted to the singular case of inequality~\eqref{interpolation} corresponding to the limit as $q=2$. We establish a family of \emph{sharp fractional logarithmic Sobolev inequalities}, in the subcritical range.
\begin{cor}\label{corlog} Let $s\in(0,n]$. Then we have the \emph{sharp logarithmic Sobolev inequality}
\be{logsob}
\isd{|F|^2\,\log\(\frac{|F|}{\nrm F2}\)}\le\mathsf C_{2,s}\isd{F\,\mathcal{L}_s\kern0.5pt F}\quad\forall\,F\in\mathrm H^{s/2}(\S^n)\,.
\ee
Equality is achieved only by constant functions and $\mathsf C_{2,s}=\frac{n-s}{2\,s}\kern1pt\kappa_{n,s}$ is optimal.\end{cor}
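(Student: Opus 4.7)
The plan is to deduce \eqref{logsob} as the limiting case $q\to2$ of the subcritical interpolation inequalities \eqref{interpolation}. The key observation is that the constant $\mathsf C_{q,s}=\frac{n-s}{2\,s}\,\kappa_{n,s}$ supplied by Theorem~\ref{Thm:Main1} is \emph{independent of~$q$}, so the right-hand side of~\eqref{interpolation} passes to the limit unchanged. All the work lies on the left-hand side.

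For that, I write $\nrm Fq^2=\exp\!\bigl(\tfrac{2}{q}\log\isd{|F|^q}\bigr)$ and differentiate in~$q$ at $q=2$. A direct computation, equivalent to applying L'H\^opital's rule to the indeterminate form $0/0$, gives
\[
\lim_{q\to2}\frac{\nrm Fq^2-\nrm F2^2}{q-2}=\isd{|F|^2\log\frac{|F|}{\nrm F2}}\,.
\]
Combined with the $q$-independence of $\mathsf C_{q,s}$, this yields \eqref{logsob} for, say, $F$ smooth and bounded away from~$0$; the general case $F\in\mathrm H^{s/2}(\S^n)$ follows by a density/truncation argument together with monotone or dominated convergence.

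To check that $\mathsf C_{2,s}$ is sharp I linearize around a constant: setting $F=1+\varepsilon\,g$ with $\isd{g}=0$ and expanding both sides of \eqref{logsob} to order $\varepsilon^2$, the inequality reduces to the \emph{fractional Poincar\'e inequality}
\[
\isd{g^2}\le\mathsf C_{2,s}\isd{g\,\mathcal L_s\kern0.5pt g}\,.
\]
From the spectrum of $\mathcal L_s$ recorded in the introduction, the smallest eigenvalue on the orthogonal complement of the constants is $\delta_1\!\bigl(\tfrac{n-s}{2}\bigr)=\frac{2\,s}{(n-s)\,\kappa_{n,s}}=\mathsf C_{2,s}^{-1}$, attained on spherical harmonics of degree~$1$; this shows both that $\mathsf C_{2,s}$ is optimal and that the linearized inequality is saturated precisely by the first-order modes.

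For the equality case, the cleanest route is to pass to the limit $q\to 2$ in the improved form of \eqref{interpolation} announced in Corollary~\ref{Cor:improvedInterpolation}: the associated remainder term is non-negative and vanishes only when $F$ is a.e.\ constant. Alternatively, the spectral/convexity argument behind Theorem~\ref{Thm:Main1} can be redone verbatim at $q=2$. I expect the only real technical obstacle to be the justification of the limit/integral exchange when $F$ is allowed to vanish, which requires an approximation by truncation but is conceptually routine.
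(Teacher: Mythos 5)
Your proposal is correct and follows essentially the same route as the paper: pass to the limit $q\to2$ in~\eqref{interpolation} using the $q$-independence of $\mathsf C_{q,s}$ to get the inequality, then obtain optimality by linearizing around constants and invoking the spectral gap $\delta_1\big(\tfrac n{q_\star}\big)=\mathsf C_{2,s}^{-1}$, exactly as in~\eqref{Ineq:LowerEstimateC}. Your additional remarks on the equality case via Corollary~\ref{Cor:improvedInterpolation} are consistent with the paper's discussion.
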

This result completes the picture of Theorem~\ref{Thm:Main1} and shows that, under appropriate precautions, the case $q=2$ can be put in a common picture with the cases corresponding to \hbox{$q\neq2$}. Taking the limit as $s\to0_+$, we recover Beckner's fractional logarithmic Sobolev inequality as stated in~\cite{MR1164616,MR1441924}. In that case, $q=2$ is critical, from the point of view of the fractional operator. The proof of Corollary~\ref{corlog} and further considerations on the $s=0$ limit will be given in Section~\ref{Sec:LogSob}.

\medskip Definition~\eqref{gamma} of $\mathcal K_s$ also applies to the range $s\in(-n,0)$ and the reader is invited to check that
\[
\mathcal K_s^{-1}=\mathcal K_{-s}\quad\forall\,s\in(0,n)
\]
is defined by the sequence of eigenvalues $\gamma_k(n/p)$ where $p=2\,n/(n+s)$ is the H\"older conjugate of $q_\star(s)$ given by~\eqref{CriticalExponent}. It is then straightforward to check that the sharp \emph{Hardy-Littlewood-Sobolev inequality on~$\S^n$}~(see \eqref{HLS2}) can be written as
\be{HLS3}
\frac{\nrm Fp^2-\nrm F2^2}{p-2}\le\frac{\kappa_{n,-s}}{2-p}\isd{F\,\mathcal L_{-s}\kern0.5pt F}\quad\forall\,F\in\mathrm L^2(\S^n)
\ee
where
\[
p=\frac{2\,n}{n+s}\in(1,2), \quad \mathcal L_{-s}\kern0.5pt:=\frac1{\kappa_{n,-s}}\(\mathrm{Id}-\mathcal K_{-s}\)~ \quad \text{and}\quad \kappa_{n,-s}=\frac{\Gamma\big(\frac{n+s}2\big)}{\Gamma\big(\frac{n-s}2\big)}\,.
\]
Notice that $\kappa_{n,-s}=1/\kappa_{n,s}$. A first consequence is that we can rewrite the result of Proposition~\ref{Prop:Square} as
\[
\nrm Gp^2-\isd{G\,\mathcal K_{-s}\kern0.5pt G}\le\nrm F{q_\star}^{2\kern0.5pt(q_\star-2)}\,\(\isd{F\,\mathcal K_s\kern0.5pt F}-\nrm F{q_\star}^2\)\,.
\]
for any $F\in\mathrm H^{s/2}(\S^n)$ and $G=F^{q_\star-1}$, where $n\ge 1$, $s\in(0,n)$, $q_\star$ is given by~\eqref{CriticalExponent} and $p=q_\star'$. A second consequence of the above observations is the extension of Theorem~\ref{Thm:Main1} to the range $(-n,0)$.
\begin{thm}\label{Thm:Main1bis} Let $n\ge1$, $s\in(-n,0)$ and $q\in[1,2\,n/(n-s))$. Inequality~\eqref{interpolation} holds with $\mathcal L_s:=\kappa_{n,-s}\,(\mathrm {Id}-\mathcal K_s)$ and sharp constant
\[\mathsf C_{q,s}=\frac{n-s}{2\,|s|}\,\frac{\Gamma\big(\frac{n-s}2\big)}{\Gamma\big(\frac{n+s}2\big)}\,.\]\end{thm}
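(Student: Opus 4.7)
The plan is to carry through the argument of Theorem~\ref{Thm:Main1} in the range $s\in(-n,0)$, exploiting the fact that here $\mathcal L_s=\kappa_{n,-s}(\mathrm{Id}-\mathcal K_s)$ is a bounded positive self-adjoint operator on $\mathrm L^2(\S^n)$ with spherical-harmonic eigenvalues
\[
\lambda_k:=\kappa_{n,-s}\bigl(1-\gamma_k\bigl(\tfrac{n-s}2\bigr)\bigr),\qquad k\ge 0,
\]
which increase from $\lambda_0=0$ to the finite asymptote $\kappa_{n,-s}$. A direct computation from the Gamma-function identity $\gamma_1(\tfrac{n-s}2)=\tfrac{n+s}{n-s}$ yields $\lambda_1=\tfrac{2|s|}{n-s}\,\kappa_{n,-s}=\mathsf C_{q,s}^{-1}$, so the optimal constant in~\eqref{interpolation} should coincide with the spectral gap of $\mathcal L_s$.

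The endpoint case $q=p:=2n/(n-s)\in(1,2)$ is nothing but the HLS inequality on $\S^n$ in the reformulation~\eqref{HLS3}: the spectral identity $\isd{F\,\mathcal K_s\kern0.5pt F}\le\nrm Fp^2$ coming from~\eqref{HLS2} and Funk-Hecke rearranges, after dividing by $2-p=2|s|/(n-s)>0$, into
\[
\frac{\nrm Fp^2-\nrm F2^2}{p-2}\le\frac{\kappa_{n,s}}{2-p}\,\isd{F\,\mathcal L_s\kern0.5pt F}=\mathsf C_{p,s}\,\isd{F\,\mathcal L_s\kern0.5pt F},
\]
which is the claimed inequality at $q=p$. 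To extend it to $q\in[1,p)$ with the same $q$-independent constant, I would invoke the convexity/monotonicity ingredient used in the proof of Theorem~\ref{Thm:Main1} (Lemma~\ref{Lem:MonotonicityGamma}), now applied to the Gamma-ratio $\gamma_k(n/q)$ as $q$ varies in $[1,p]$, together with the spectral expansion $\isd{F\,\mathcal L_s\kern0.5pt F}=\sum_{k\ge1}\lambda_k\,\nrm{F_{(k)}}2^2$ on the right-hand side of~\eqref{interpolation}.

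Sharpness of $\mathsf C_{q,s}$ follows from a linearization around constants. Taking $F=1+\varepsilon Y$ where $Y$ is a spherical harmonic of degree one, so $\mathcal L_sY=\lambda_1 Y$ and $\isd Y=0$, a direct Taylor expansion yields $\nrm Fq^2-\nrm F2^2=(q-2)\,\varepsilon^2\nrm Y2^2+o(\varepsilon^2)$ and $\isd{F\,\mathcal L_s\kern0.5pt F}=\varepsilon^2\lambda_1\nrm Y2^2$; matching the two leading orders in~\eqref{interpolation} forces $\mathsf C_{q,s}\,\lambda_1=1$ and so determines the sharp constant.

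The essential obstacle is the passage from $q=p$ to $q\in[1,p)$. A naive reduction using only $\lambda_k\ge\lambda_1$, i.e.\ a Poincar\'e-type estimate on the right-hand side, is insufficient: concentrated test functions show that $(\nrm F2^2-\nrm Fq^2)/(2-q)$ is in general not controlled by $\nrm F2^2-(\isd F)^2$ for subcritical $q$. The Beckner-type convexity observation is precisely what bridges the HLS endpoint and the full subcritical range while preserving the sharp constant $\mathsf C_{q,s}=1/\lambda_1$ throughout $[1,p)$.
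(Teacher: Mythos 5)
Your proposal is correct and follows essentially the same route as the paper: the endpoint $q=q_\star(s)=2n/(n-s)\in(1,2)$ is the Hardy--Littlewood--Sobolev inequality in the form~\eqref{HLS3}, the subcritical range $q\in[1,q_\star)$ is reached via the monotonicity of $q\mapsto(\gamma_k(n/q)-1)/(q-2)$ from Lemma~\ref{Lem:MonotonicityGamma}, and sharpness comes from the linearization $F=1+\varepsilon\,G_1$ as in~\eqref{Ineq:LowerEstimateC}. Your closing remark on why a bare Poincar\'e bound would not suffice is a fair diagnosis of where the convexity lemma is genuinely needed.
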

The results of Theorems~\ref{Thm:Main1} and~\ref{Thm:Main1bis} are summarized in Figure ~\ref{Fig2}.
\begin{figure}[ht]
\begin{center}
\includegraphics[height=4cm]{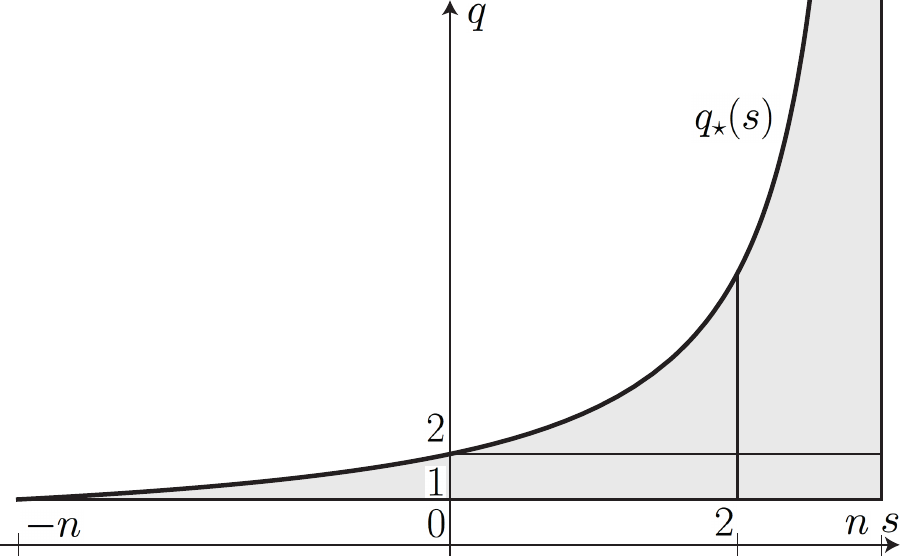}
\caption{\label{Fig2}{\sl\small The optimal constant $\mathsf C_{q,s}$ in~\eqref{interpolation} is independent of $q$ and determined for any given $s$ by the critical case $q=q_\star(s)$ which corresponds to the \emph{Hardy-Littlewood-Sobolev inequality}~\eqref{HLS} if $s\in(-n,0)$ and to the \emph{Sobolev inequality}~\eqref{Sobolev} if $s\in(0,n)$. The case $s=0$ is covered by Corollary~\ref{Cor:s=0}, while $q=2$ corresponds to the \emph{fractional logarithmic Sobolev} inequality~\eqref{LogSob} if $s=0$ and the subcritical \emph{fractional logarithmic Sobolev} inequality by Corollary ~\ref{corlog} if $s\in(0,n]$.}}
\end{center}
\end{figure}

\medskip To conclude with the outline of this paper, Section~\ref{Sec:Euclidean} is devoted to the stereographic projection and consequences for functional inequalities on the Euclidean space. By stereographic projection,~\eqref{Sobolev} becomes
\[
\nrmr f{q_\star}^2\le\mathsf S_{n,s}\,\|f\|_{\dot{\mathrm H}^{s/2}(\R^n)}^2\quad\forall\,f\in\dot{\mathrm H}^{s/2}(\R^n)\,,
\]
where
\[\|f\|_{\dot{\mathrm H}^{s/2}(\R^n)}^2:=\ird{f\,(-\Delta)^{s/2}f}\]
and the optimal constant is such that
\[
\mathsf S_{n,s}=\kappa_{n,s}\,|\S^n|^{\frac2{q_\star}-1}\,.
\]
The fact that~\eqref{Sobolev} is equivalent to the fractional Sobolev inequality on the Euclidean space is specific to the critical exponent $q=q_\star(s)$. In the subcritical range, weights appear. Let us introduce the weighted norm
\[
\nrmrs f{q,\beta}^q:=\ird{|f|^q\,(1+|x|^2)^{-\frac\beta2}}\,.
\]
The next result is inspired by a non-fractional computation done in~\cite{DET} and relies on the stereographic projection.
\begin{thm}\label{Thm:Main2} Let $n\ge1$, $s\in(0,n)$, $q\in(2,q_\star)$ with $q_\star$ given by~\eqref{CriticalExponent} and $\beta=2\,n\,(1-\frac q{q_\star})$. Then we have the weighted inequality
\be{fCKNopt}
\nrmrs f{q,\beta}^2\le\mathsf a\,\|f\|_{\dot{\mathrm H}^{s/2}(\R^n)}^2+\mathsf b\,\nrmrs f{2,2s}^2\quad\forall\,f\in C^\infty_0(\R^n)
\ee
where
\[
\mathsf a=\frac{q-2}{q_\star-2}\,\kappa_{n,s}\,2^{\kern1pt n\kern0.5pt(\frac2{q_\star}-\frac2q)}\,|\S^n|^{\frac 2q-1} \quad \text{and}\quad \mathsf b=\frac{q_\star-q}{q_\star-2}\,2^{\kern1pt n\kern0.5pt(1-\frac 2q)}\,|\S^n|^{\frac 2q-1}\,.
\]
Moreover, if $q<q_\star$, equality holds in~\eqref{fCKNopt} if and only if $f$ is proportional to $f_{s,\star}(x):=(1+|x|^2)^{-\frac{n-s}2}$.
\end{thm}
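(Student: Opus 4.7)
The strategy is to pull back~\eqref{interpolation} on $\S^n$ to $\R^n$ via stereographic projection. Let $\Sigma^{-1}:\R^n\to\S^n\setminus\{N\}$ be the inverse stereographic projection, set $J(x):=2/(1+|x|^2)$, so that $d\mu = |\S^n|^{-1}\,J(x)^n\,dx$, and adopt the conformal substitution
\[
F\circ\Sigma^{-1}(x) = J(x)^{-(n-s)/2}\,f(x)\,,
\]
which is the natural one at the critical exponent $q_\star$. With this substitution the conformal fractional Laplacian $\mathcal A_s = \mathcal L_s + \kappa_{n,s}^{-1}\,\mathrm{Id}$ satisfies the classical intertwining identity
\[
\mathcal A_s[F]\circ\Sigma^{-1} = J^{-(n+s)/2}\,(-\Delta)^{s/2} f\,,
\]
which is the defining conformal covariance property of $\mathcal A_s$ on the sphere.

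The first step is to rewrite each of the three terms in~\eqref{interpolation} in Euclidean variables. A direct change of variables gives
\[
\nrm{F}{q}^2 = |\S^n|^{-2/q}\,2^{2n/q-(n-s)}\,\nrmrs{f}{q,\beta}^2\,,\qquad \nrm{F}{2}^2 = |\S^n|^{-1}\,2^s\,\nrmrs{f}{2,2s}^2
\]
with weight exponent $\beta = 2n - q(n-s) = 2n(1-q/q_\star)$, while the intertwining identity together with $\mathcal L_s = \mathcal A_s - \kappa_{n,s}^{-1}\,\mathrm{Id}$ yields
\[
\isd{F\,\mathcal L_s F} = |\S^n|^{-1}\,\|f\|_{\dot{\mathrm H}^{s/2}(\R^n)}^2 - \kappa_{n,s}^{-1}\,|\S^n|^{-1}\,2^s\,\nrmrs{f}{2,2s}^2\,.
\]

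The second step is to plug these three identities into~\eqref{interpolation}, multiply through by $q-2$, use $\mathsf C_{q,s} = \kappa_{n,s}/(q_\star-2)$, and collect the $\nrmrs{f}{2,2s}^2$ contributions. The coefficient of $\nrmrs{f}{2,2s}^2$ simplifies to $(q_\star-q)/(q_\star-2)$ times $|\S^n|^{-1}\,2^s$, while the $\dot{\mathrm H}^{s/2}$ term picks up coefficient $(q-2)\,\kappa_{n,s}/(q_\star-2)$ times $|\S^n|^{-1}$. Dividing by the common prefactor $|\S^n|^{-2/q}\,2^{2n/q-(n-s)}$, and using $2n/q_\star = n-s$ to rewrite the powers of $2$, I expect to recover~\eqref{fCKNopt} with exactly the stated constants $\mathsf a$ and $\mathsf b$. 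The equality case comes for free from Theorem~\ref{Thm:Main1}: when $q<q_\star$, equality in~\eqref{interpolation} forces $F$ to be constant on $\S^n$, which under the substitution translates to $f\propto J^{(n-s)/2}$, i.e.\ $f$ proportional to $f_{s,\star}$.

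The main point of care, rather than a genuine obstacle, is pinning down the correct exponents in the intertwining identity for $\mathcal A_s$ and tracking the powers of $J$, $|\S^n|$ and $2$ cleanly through the substitution. The one conceptual observation is that $\mathcal L_s$ itself is not conformally covariant, so the weighted $\nrmrs{f}{2,2s}^2$ term on the right-hand side of~\eqref{fCKNopt} is precisely the Euclidean image of the $\kappa_{n,s}^{-1}\,\mathrm{Id}$ correction relating $\mathcal L_s$ to $\mathcal A_s$; the factor $(q_\star-q)/(q_\star-2)$ is exactly what is needed for this correction to vanish in the critical limit $q\to q_\star$, consistently with the standard Euclidean fractional Sobolev inequality.
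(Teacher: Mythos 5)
Your proposal is correct and follows essentially the same route as the paper: stereographic projection, the conformal covariance (intertwining) identity for $\mathcal A_s$, the substitution $f=|J|^{1/q_\star}\,F\circ\mathcal S^{-1}$, and bookkeeping of the powers of $2$ and $|\S^n|$, all of which check out and reproduce the stated $\mathsf a$ and $\mathsf b$. Your treatment of the equality case (equality in the subcritical inequality~\eqref{interpolation} forces $F$ to be constant, hence $f\propto f_{s,\star}$) is also the paper's argument, resting on Corollary~\ref{Cor:improvedInterpolation}.
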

This result is one of the few examples of optimal functional inequalities involving fractional operators on $\R^n$. It touches the area of fractional Hardy-Sobolev inequalities and weighted fractional Sobolev inequalities, for which we refer to~\cite{MR3366777,chensymmetry} and~\cite{Chen-Yang16}, respectively, and the references therein. The wider family of Caffarelli-Kohn-Nirenberg type inequalities raises additional difficulties, for instance related with symmetry and symmetry breaking issues, which are so far essentially untouched in the framework of fractional operators, up to few exceptions like~\cite{chensymmetry}. Inequality~\eqref{fCKNopt} holds not only for the space $C^\infty_0(\R^n)$ of all smooth functions with compact support but also for the much larger space of functions obtained by completion of $C^\infty_0(\R^n)$ with respect to the norm defined by $\|f\|^2:=\|f\|_{\dot{\mathrm H}^{s/2}(\R^n)}^2+\nrmrs f{2,2s}^2$.

\section{Subcritical interpolation inequalities}\label{Sec:Interpolation}

In this section, our purpose is to prove Theorem~\ref{Thm:Main1}.

\subsection{A Poincar\'e inequality}\label{SubSec:Poincare}

We start by recalling some basic facts:
\begin{enumerate}
\item[(i)] If $q$ and $q'$ are H\"older conjugates, then $n/q'=n-x$ with $x=n/q$,
\item[(i)] $\gamma_0(x)=1$ for any $x>0$,
\item[(ii)] $\gamma_k(n/2)=1$ and $\delta_k(n/2)=0$ for any $k\in\N$,
\item[(iii)] $\gamma_1(x)=(n-x)/x$, $\gamma_1(n/q)=q-1$ and $\delta_1(n/q_\star)=(q_\star-2)/\kappa_{n,s}$. As a consequence, we know that the first positive eigenvalues of $\mathcal K_s$ and $\mathcal L_s$ are
\[
\lambda_1(\mathcal K_s)=\gamma_1\big(\tfrac n{q_\star}\big)=q_\star-1\quad\mbox{and}\quad\lambda_1(\mathcal L_s)=\delta_1\big(\tfrac n{q_\star}\big)=\frac{q_\star-2}{\kappa_{n,s}}=\frac{2\,s}{(n-s)\,\kappa_{n,s}}\,.
\]
\end{enumerate}
A straightforward consequence is the following sharp Poincar\'e inequality.
\begin{lem}\label{Lem:Poincare} For any $F\in\mathrm H^{s/2}(\S^n)$, we have
\[
\nrm{F-F_{(0)}}2^2\le\mathsf C_{1,s}\isd{F\,\mathcal L_s\kern0.5pt F}\quad\mbox{where}\quad F_{(0)}=\isd F\,,
\]
and $\mathsf C_{1,s}=\kappa_{n,s}/(q_\star-2)$ is the optimal constant. Any function $F=F_{(0)}+F_{(1)}$, with $F_{(1)}$ such that $\mathcal L_s\,F_{(1)}=\lambda_1(\mathcal L_s)\,F_{(1)}$, realizes the equality case.\end{lem}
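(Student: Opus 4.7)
My plan is to reduce the inequality to a spectral comparison using the spherical-harmonic decomposition already introduced before the statement. Writing $F = \sum_{k=0}^\infty F_{(k)}$ with $F_{(0)} = \int_{\mathbb S^n} F\,d\mu$, orthogonality of the spaces of spherical harmonics in $\mathrm L^2(\mathbb S^n)$ gives
\[
\nrm{F-F_{(0)}}2^2 = \sum_{k=1}^\infty \nrm{F_{(k)}}2^2,\qquad \isd{F\,\mathcal L_s\kern0.5pt F} = \sum_{k=1}^\infty \delta_k\big(\tfrac n{q_\star}\big)\,\nrm{F_{(k)}}2^2,
\]
where the second identity is exactly the spectral definition of $\mathcal L_s$ recalled in the introduction. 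Hence the desired inequality is equivalent to
\[
\sum_{k=1}^\infty \nrm{F_{(k)}}2^2 \le \mathsf C_{1,s}\sum_{k=1}^\infty \delta_k\big(\tfrac n{q_\star}\big)\,\nrm{F_{(k)}}2^2,
\]
which in turn follows as soon as $\delta_k(n/q_\star) \ge \delta_1(n/q_\star)$ for every $k\ge 1$.

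The main (and only real) step is therefore the monotonicity of $k\mapsto\delta_k(x)$ for $x=n/q_\star$. From $\delta_k(x) = \Gamma(n-x+k)/\Gamma(x+k) - \Gamma(n-x)/\Gamma(x)$ one computes
\[
\frac{\delta_{k+1}(x)+\frac{\Gamma(n-x)}{\Gamma(x)}}{\delta_k(x)+\frac{\Gamma(n-x)}{\Gamma(x)}} = \frac{n-x+k}{x+k},
\]
so the sequence is (strictly) increasing as soon as $x < n/2$. Since $q\in(2,q_\star]$ and more precisely $q_\star>2$ when $s\in(0,n)$, we indeed have $x = n/q_\star < n/2$, hence the required monotonicity. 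Evaluating at $k=1$ gives $\delta_1(n/q_\star) = (q_\star-2)/\kappa_{n,s}$, which is already recorded in item~(iii) preceding the lemma and which provides the constant $\mathsf C_{1,s} = \kappa_{n,s}/(q_\star-2)$.

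For sharpness and the equality case, test the inequality against $F = F_{(0)} + F_{(1)}$ with $F_{(1)}$ in the first nontrivial eigenspace of $\mathcal L_s$: both sides of the inequality then equal $\delta_1(n/q_\star)^{-1}\,\delta_1(n/q_\star)\,\nrm{F_{(1)}}2^2$, so equality holds. Conversely, strict monotonicity $\delta_k(n/q_\star) > \delta_1(n/q_\star)$ for $k\ge 2$ forces $F_{(k)} = 0$ for all $k\ge 2$ in any equality case, which yields the stated characterization. The only real subtlety is to be careful that the case $s=n$ (where $q_\star$ is not defined in $(0,\infty)$) is also covered: here $n/q_\star$ should just be read as $0$ in the inequality $x<n/2$ and the monotonicity argument goes through unchanged, so no separate treatment is needed. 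I expect no significant obstacle beyond verifying the elementary ratio identity for $\delta_k$.
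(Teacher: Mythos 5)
Your proof is correct and follows essentially the same route as the paper: decompose $F$ into spherical harmonics and bound each eigenvalue $\delta_k(n/q_\star)$ from below by $\delta_1(n/q_\star)=\lambda_1(\mathcal L_s)$. The only difference is that you supply an explicit verification (via the ratio identity for $\Gamma(n-x+k)/\Gamma(x+k)$) of the monotonicity of $k\mapsto\delta_k(n/q_\star)$, which the paper simply asserts.
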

\begin{proof} The proof is elementary. With the usual notations, we may write
\begin{multline*}
\isd{F\,\mathcal L_s\kern0.5pt F}=\isd{(F-F_{(0)})\,\mathcal L_s\kern0.5pt(F-F_{(0)})}=\sum_{k=1}^\infty\delta_k\big(\tfrac n{q_\star}\big)\isd{|F_{(k)}|^2}\\
\ge\delta_1\big(\tfrac n{q_\star}\big)\,\nrm{F-F_{(0)}}2^2=\lambda_1(\mathcal L_s)\,\nrm{F-F_{(0)}}2^2
\end{multline*}
because $\delta_k(n/q_\star)$ is increasing with respect to $k\in\N$.
\end{proof}
The sharp Poincar\'e constant $\mathsf C_{1,s}$ is a lower bound for $\mathsf C_{q,s}$, for any $q\in(1,q_\star]$ if $s<n$, or any $q>1$ if $s=n$. Indeed, if $q\neq2$, by testing inequality~\eqref{interpolation} with $F=1+\varepsilon\,G_1$, where $G_1$ is an eigenfunction of $\mathcal L_s$ associated with the eigenvalue $\lambda_1(\mathcal L_s)$, it is easy to see that
\begin{multline*}
\varepsilon^2\,\nrm{G_1}2^2\sim\frac{\nrm Fq^2-\nrm F2^2}{q-2}\le\mathsf C_{q,s}\isd{F\,\mathcal L_s\kern0.5pt F}\\
=\mathsf C_{q,s}\,\varepsilon^2\isd{G_1\,\mathcal L_s\kern0.5pt G_1}
\end{multline*}
as $\varepsilon\to0$, which means that,
\[
\nrm{G_1}2^2=\lambda_1(\mathcal L_s)\,\mathsf C_{q,s}\,\nrm{G_1}2^2\,,
\]
by keeping only the leading order term in $\varepsilon$.
Altogether, this proves that
\be{Ineq:LowerEstimateC}
\mathsf C_{q,s}\ge\frac1{\lambda_1(\mathcal L_s)}=\frac{\kappa_{n,s}}{q_\star-2}\,.
\ee
A similar computation, with~\eqref{interpolation} replaced by~\eqref{logsob} and $F=1+\varepsilon\,G_1$, shows that
\[
\isd{|F|^2\,\log\(\frac{|F|}{\|F\|_2}\)}\sim\mathsf C_{2,s}\,\varepsilon^2\isd{G_1\,\mathcal L_s\kern0.5pt G_1}
\]
as $\varepsilon\to0$, so that~\eqref{Ineq:LowerEstimateC} also holds if $q=2$. Hence, under the Assumptions of Theorem~\ref{Thm:Main1},~\eqref{Ineq:LowerEstimateC} holds for any $q\ge1$. In order to establish Theorem~\ref{Thm:Main1} and Corollary~\ref{corlog}, we have now to prove that~\eqref{Ineq:LowerEstimateC} is actually an equality.

\subsection{Some spectral estimates}\label{SubSec:Spectral}

Let us start with some observations on the function $\gamma_k$ in~\eqref{gamma}. Expanding its expression, we get that
\[
\gamma_k(x)=\frac{(n+k-1-x)\,(n+k-2-x)\ldots(n-x)}{(k-1+x)\,(k-2+x)\ldots x}
\]
for any $k\ge1$. Taking the logarithmic derivative, we find that
\be{gamma'}
\alpha_k(x):=-\,\frac{\gamma_k'(x)}{\gamma_k(x)}=\sum_{j=0}^{k-1}\beta_j(x)\quad\mbox{with}\quad\beta_j(x):=\frac1{n+j-x}+\frac1{j+x}
\ee
and observe that $\alpha_k$ is positive. As a consequence, $\gamma_k'<0$ on $[0,n]$ and, from the expression of $\gamma_k$, we read that $\gamma_k(n)=0$. Since $\gamma_k(n/2)=1$, we know that $\gamma_k(n/q)>1$ if and only if $q>2$. Using the fact that
\[
\frac{\gamma_k''(x)}{\gamma_k(x)}=\big(\alpha_k(x)\big)^2-\,\alpha_k'(x)=\(\frac{\gamma_k'(x)}{\gamma_k(x)}\)^2+\sum_{j=0}^{k-1}\frac{(2\,j+n)\,(n-2\,x)}{(n+j-x)^2\,(j+x)^2}\,,
\]
we have $\gamma_k''(x)\ge0$, which establishes the convexity of $\gamma_k$ on $[0,n/2]$. Moreover, we know that
\[
\gamma_k'\big(\tfrac n2\big)=-\,\alpha_k\big(\tfrac n2\big)=-\sum_{j=0}^{k-1}\frac4{n+2\,j}\,.
\]
See Figure~\ref{Fig1}. Taking these observations into account, we can state the following result.
\begin{lem}\label{Lem:MonotonicityGamma} Assume that $n\ge1$. With the above notations, the function
\[
q\mapsto\frac{\gamma_k\big(\tfrac nq\big)-1}{q-2}
\]
is strictly monotone increasing on $(1,\infty)$ for any $k\ge2$.\end{lem}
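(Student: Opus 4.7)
The plan is to rewrite the difference quotient $F_k(q):=\frac{\gamma_k(n/q)-1}{q-2}$ as an explicit finite sum of positive, manifestly monotone functions of the variable $x=n/q$, and then to deduce the strict monotonicity in $q$ by a termwise log-derivative computation. The only potentially delicate point is the apparent singularity at $q=2$, but the reformulated sum will be smooth across $q=2$, so this is a harmless removable singularity.

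The first step is to use the product form $\gamma_k(n/q)=\prod_{j=0}^{k-1}\frac{(n+j)\,q-n}{jq+n}$ from the paper together with the elementary identity
\[
\frac{(n+j)\,q-n}{jq+n}=1+(q-2)\,b_j(q),\qquad b_j(q):=\frac{n}{jq+n},
\]
to factor $(q-2)$ out of $\gamma_k(n/q)-1$. Writing $\gamma_m(n/q)=\prod_{j=0}^{m-1}\bigl(1+(q-2)\,b_j(q)\bigr)$ and telescoping,
\[
\gamma_k(n/q)-1=\sum_{m=0}^{k-1}\bigl(\gamma_{m+1}(n/q)-\gamma_m(n/q)\bigr)=(q-2)\sum_{m=0}^{k-1}\frac{n}{mq+n}\,\gamma_m(n/q),
\]
and therefore $F_k(q)=\sum_{m=0}^{k-1}\frac{n}{mq+n}\,\gamma_m(n/q)$. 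This right-hand side is smooth on $(1,\infty)$ and already explains why $F_k$ has no true singularity at $q=2$.

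Next I would substitute $x=n/q\in(0,n)$. Using $\gamma_m(x)=\prod_{j=0}^{m-1}\frac{n-x+j}{x+j}$ and $\frac{n}{mq+n}=\frac{x}{m+x}$, each summand simplifies, after cancellation of one $x$ factor, to
\[
\widetilde{\gamma}_m(x):=\prod_{j=0}^{m-1}\frac{n-x+j}{x+j+1},
\]
with $\widetilde{\gamma}_0\equiv 1$. On $(0,n)$ every factor of $\widetilde{\gamma}_m$ is strictly positive, so log-differentiation immediately gives
\[
\widetilde{\gamma}_m'(x)=-\,\widetilde{\gamma}_m(x)\sum_{j=0}^{m-1}\left(\frac{1}{n-x+j}+\frac{1}{x+j+1}\right)<0\quad\text{for every }m\ge 1.
\]

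Finally, for $k\ge 2$, summing over $m=0,\dots,k-1$ yields $\frac{d}{dx}\Bigl(\sum_{m=0}^{k-1}\widetilde{\gamma}_m(x)\Bigr)<0$ on $(0,n)$, and since $q\mapsto x=n/q$ is a strictly decreasing diffeomorphism of $(1,\infty)$ onto $(0,n)$, $F_k(q)$ is strictly increasing on $(1,\infty)$, which is the claim. I do not foresee a genuine obstacle: the telescoping identity is purely algebraic, and the sign of the derivative of each $\widetilde{\gamma}_m$ is read off directly from its log-derivative, so the only care needed is to take the reformulated sum as the definition of $F_k$ across the removable singularity at $q=2$.
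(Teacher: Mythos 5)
Your proof is correct, and it takes a genuinely different route from the paper. The paper proves that $q\mapsto\gamma_k(n/q)$ is strictly \emph{convex} on $(1,\infty)$ (via the inequality $x\,\gamma_k''+2\,\gamma_k'>0$, itself obtained from a careful lower bound on $\alpha_k^2-\alpha_k'-\tfrac2x\,\alpha_k$ using the identity $\beta_0^2-\beta_0'-\tfrac2x\,\beta_0=0$), and then reads off the monotonicity of the secant slope through the point $(2,\gamma_k(n/2))=(2,1)$. You instead factor $(q-2)$ out of $\gamma_k(n/q)-1$ exactly, by telescoping the product representation with $\frac{(n+j)\,q-n}{j\,q+n}=1+(q-2)\,\frac{n}{j\,q+n}$, which turns the difference quotient into the finite sum $\sum_{m=0}^{k-1}\widetilde\gamma_m(n/q)$ with $\widetilde\gamma_m(x)=\prod_{j=0}^{m-1}\frac{n-x+j}{x+j+1}$; each summand with $m\ge1$ is manifestly positive and strictly decreasing in $x\in(0,n)$ by its logarithmic derivative, and composing with the decreasing map $q\mapsto n/q$ finishes the argument. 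I checked the algebra: the identity, the telescoping, and the cancellation of the factor $x$ are all correct, and your formula is consistent with the paper's endpoint value $\lim_{q\to2}\frac{\gamma_k(n/q)-1}{q-2}=\sum_{m=0}^{k-1}\frac n{n+2m}=\frac n4\,\alpha_k(\tfrac n2)$. Your argument is more elementary (no second derivatives, no convexity), gives a closed form for the difference quotient that makes the removable singularity at $q=2$ transparent (useful for Corollary~\ref{corlog}), and correctly isolates why $k\ge2$ is needed for \emph{strict} monotonicity (the $m=0$ term is constant, matching $\gamma_1(n/q)-1=q-2$); what it does not give you, and the paper's convexity statement does, is the additional geometric information that $\gamma_k(n/\cdot)$ is convex in $q$, which the paper also records as a byproduct.
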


\begin{figure}[ht]
\begin{center}
\begin{tikzpicture}
{\small
\draw [->] (0,0) -- (0,3.15) node {};
\draw [->] (0,0) -- (3.2,0) node [right] {$x$};
\draw [fill] (3,0) node [below] {$n$};
\draw [fill] (-0.15,0) node [below] {$0$};
\draw [fill] (1.5,0) node [below] {$\frac n2$};
\draw [fill] (1,0) node [below] {$1$};
\draw [fill] (2,0) node [below] {$2$};
\draw [thick] (1,-0.05) -- (1,0.05);
\draw [thick] (2,-0.05) -- (2,0.05);
\draw [thick] (1.5,-0.05) -- (1.5,0.05);
\draw [thick] (3,-0.05) -- (3,0.05);
\draw [fill] (1.5,0.5) circle [radius=0.03];
\draw [fill] (2,0.5) circle [radius=0.03];
\draw [fill] (-0.15,0.5) node {$1$};
\draw [fill] (3.2,0.5) node {$\gamma_0$};
\draw [dotted, domain=0:3] plot({\x},{0.5});
\draw [dotted, domain=0.425:3] plot({\x},{1.5/\x-0.5});
\draw [thick, domain=0.86:3] plot({\x},{0.5*(3-\x)*(4-\x)*(5-\x)/\x/(\x+1)/(\x+2)});
\draw [fill] (1.1,3) node {$\gamma_k$};
\draw [fill] (2,3) node[right] {$k\ge 2$};
\draw [fill] (0.55,3) node[left] {$\gamma_1$};
\draw [dotted, domain=1:3] plot({\x},{1.5/3*\x-0.5});
\draw [thick, domain=1:3] plot({3/\x},{0.5*(3-\x)*(4-\x)*(5-\x)/\x/(\x+1)/(\x+2)});
\draw [fill] (3.2,2) node[above] {$\gamma_k(\frac n\cdot)$};
\draw [fill] (3.2,0.9) node[above] {$\gamma_1(\frac n\cdot)$};
}
\end{tikzpicture}
\caption{\label{Fig1}{\sl\small The functions $x\mapsto\gamma_k(x)$ and $q\mapsto\gamma_k(n/q)$ are both convex, and such that $\gamma_k(n/2)=1$.}}
\end{center}
\end{figure}
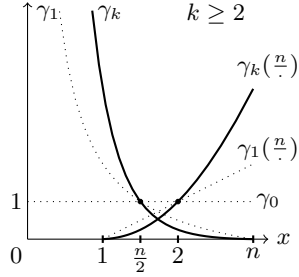

\begin{proof} Let us prove that $q\mapsto\gamma_k(n/q)$ is strictly \emph{convex} with respect to~$q$ for any $k\ge 2$. Written in terms of $x=n/q$, it is sufficient to prove that
\[
x\,\gamma_k''+ 2\,\gamma_k'>0\quad\forall\,x\in(0,n)\,,
\]
which can also be rewritten as
\[
\alpha_k^2-\alpha_k'-\tfrac2x\,\alpha_k>0\,.
\]
Let us prove this inequality. Using the estimates
\[
\alpha_k^2=\(\sum_{j=0}^{k-1}\beta_j\)^2\ge2\,\beta_0\sum_{j=1}^{k-1}\beta_j+\sum_{j=0}^{k-1}\beta_j^2\,,
\]
\[
\beta_0^2-\beta_0'-\tfrac2x\,\beta_0=0\,,
\]
and
\[
2\,\beta_0\,\beta_j+\beta_j^2-\beta_j'-\tfrac2x\,\beta_j=\frac{2\,(n+j)\,(n+2\,j)}{(n-x)\,(n+j-x)\,(j+x)^2}
\]
for any $j\ge1$, we actually find that
\[
\alpha_k^2-\alpha_k'-\tfrac2x\,\alpha_k\ge\sum_{j=1}^{k-1}\frac{2\,(n+j)\,(n+2\,j)}{(n-x)\,(j+n-x)\,(j+x)^2}\quad\forall\,k\ge2\,,
\]
which concludes the proof. Note that as a byproduct, we also proved the strict convexity of $\gamma_k$ for the whole range $x\in(0,n)$. See Figure 2 for a summary of properties of the spectral functions. \end{proof}

\begin{proof}[Proof of Theorem~\ref{Thm:Main1}] We deduce from~\eqref{Sobolev} that
\[
\frac{\nrm Fq^2-\nrm F2^2}{q-2}\le\sum_{k=1}^\infty\frac{\gamma_k\big(\tfrac nq\big)-1}{q-2}\isd{|F_{(k)}|^2}
\]
because $\gamma_0(x)=1$. It follows from Lemma~\ref{Lem:MonotonicityGamma} that
\begin{multline*}
\frac{\nrm Fq^2-\nrm F2^2}{q-2}\le\sum_{k=1}^\infty\frac{\gamma_k\big(\tfrac n{q_\star}\big)-1}{q_\star-2}\isd{|F_{(k)}|^2}\\=\frac{\kappa_{n,s}}{q_\star-2}\sum_{k=1}^\infty\delta_k\big(\tfrac n{q_\star}\big)\isd{|F_{(k)}|^2}=\frac{\kappa_{n,s}}{q_\star-2}\isd{F\,\mathcal L_s\kern0.5pt F}\,.
\end{multline*}
This proves that $\mathsf C_{q,s}\le\frac{\kappa_{n,s}}{q_\star-2}$. The reverse inequality has already been shown in~\eqref{Ineq:LowerEstimateC}.\end{proof}

\begin{proof}[Proof of Theorem~\ref{Thm:Main1bis}] With $s\in(-n,0)$, it turns out that $q_\star$ defined by~\eqref{CriticalExponent} is in the range $(1,2)$ and plays the role of $p$ in~\eqref{HLS3}. According to Lemma~\ref{Lem:MonotonicityGamma}, the inequality holds with the same constant for any $q\in(1,q_\star)$, and this constant is optimal because of~\eqref{Ineq:LowerEstimateC}.\end{proof}

\subsection{An improved inequality with a remainder term}\label{SubSec:Remainder}

What we have shown in Section~\ref{SubSec:Spectral} is actually that the fractional Sobolev inequality~\eqref{Sobolev} is equivalent to the following improved subcritical inequality.
\begin{cor}\label{Cor:improvedInterpolation} Assume that $n\ge1$, $q\in[1,2)\cup(2,q_\star)$ if $s\in(0,n)$, and $q\in[1,2)\cup(2,\infty)$ if $s=n$. For any $F\in\mathrm H^{s/2}(\S^n)$ we have
\[
\frac{\nrm Fq^2-\nrm F2^2}{q-2}+\isd{F\,\mathcal R_{q, s}\kern0.5pt F}\le\frac{\kappa_{n,s}}{q_\star-2}\isd{F\,\mathcal L_s\kern0.5pt F}
\]
where $\mathcal R_{q,s}$ is a positive semi-definite operator whose kernel is generated by the spherical harmonics corresponding to $k=0$ and $k=1$.\end{cor}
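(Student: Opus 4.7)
The plan is to read the operator $\mathcal R_{q,s}$ directly off the slack accumulated in the proof of Theorem~\ref{Thm:Main1}: that argument bounds the left-hand side of~\eqref{interpolation} spectrally with weights $(\gamma_k(n/q)-1)/(q-2)$, and then replaces each weight by the (a priori larger) weight $(\gamma_k(n/q_\star)-1)/(q_\star-2)$ via Lemma~\ref{Lem:MonotonicityGamma}. The coefficient-by-coefficient gap is the natural candidate, so I would define
\[
\isd{F\,\mathcal R_{q,s}\kern0.5pt F}:=\sum_{k=2}^\infty\(\frac{\gamma_k(n/q_\star)-1}{q_\star-2}-\frac{\gamma_k(n/q)-1}{q-2}\)\isd{|F_{(k)}|^2}
\]
and interpret it through the spherical harmonic expansion $F=\sum_{k\ge0}F_{(k)}$.

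To verify the announced kernel structure and the positivity, note first that $\gamma_0\equiv1$ on $(0,n)$, so the $k=0$ mode contributes nothing and constants lie in $\ker\mathcal R_{q,s}$. The explicit identity $\gamma_1(n/q)=q-1$ yields $(\gamma_1(n/q)-1)/(q-2)=1$ for every admissible $q$, so the $k=1$ coefficient also vanishes and every spherical harmonic of degree one belongs to $\ker\mathcal R_{q,s}$. For $k\ge2$, Lemma~\ref{Lem:MonotonicityGamma} asserts strict monotonicity of $q\mapsto(\gamma_k(n/q)-1)/(q-2)$ on $(1,\infty)$; combined with the hypothesis $q<q_\star$, this makes every remaining coefficient strictly positive, so $\mathcal R_{q,s}$ is positive semi-definite with kernel exactly the span of the spherical harmonics of degrees $0$ and $1$.

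The assembly step is then purely algebraic. From the definitions of $\delta_k$, $\gamma_k$ and $\kappa_{n,s}$ one reads off $\delta_k(n/q_\star)=(\gamma_k(n/q_\star)-1)/\kappa_{n,s}$, and hence
\[
\frac{\kappa_{n,s}}{q_\star-2}\isd{F\,\mathcal L_s\kern0.5pt F}=\sum_{k=1}^\infty\frac{\gamma_k(n/q_\star)-1}{q_\star-2}\isd{|F_{(k)}|^2}.
\]
Subtracting the starting bound of the proof of Theorem~\ref{Thm:Main1}, namely
\[
\frac{\nrm Fq^2-\nrm F2^2}{q-2}\le\sum_{k=1}^\infty\frac{\gamma_k(n/q)-1}{q-2}\isd{|F_{(k)}|^2},
\]
from this equality yields the claimed improved inequality, the $k=1$ terms cancelling in the subtraction.

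The only point that requires attention is the justification of the starting bound for both signs of $q-2$: it comes from~\eqref{Sobolev} applied with auxiliary parameter $\tilde s=n(1-2/q)\in(0,s)$ when $q>2$, and from the spectral form~\eqref{HLS2} of the Hardy--Littlewood--Sobolev inequality, applied with $p=q$, when $q\in[1,2)$. Beyond this sign-tracking, the Corollary is a purely algebraic reorganization of the proof of Theorem~\ref{Thm:Main1}, the key input being Lemma~\ref{Lem:MonotonicityGamma}.
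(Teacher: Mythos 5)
Your proposal is correct and follows essentially the same route as the paper: the operator $\mathcal R_{q,s}$ is defined spectrally by the coefficients $\epsilon_k=\frac{\gamma_k(n/q_\star)-1}{q_\star-2}-\frac{\gamma_k(n/q)-1}{q-2}$ for $k\ge2$, whose positivity is exactly Lemma~\ref{Lem:MonotonicityGamma}, and the inequality is the difference of the two spectral bounds appearing in the proof of Theorem~\ref{Thm:Main1}. Your extra care about the $k=0,1$ cancellations and about justifying the starting bound separately for $q>2$ (via~\eqref{Sobolev} with $\tilde s=n(1-2/q)$) and $q\in[1,2)$ (via~\eqref{HLS2}, where the sign of $q-2$ reverses the inequality) only makes explicit what the paper leaves implicit.
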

\begin{proof} We observe that
\[
\isd{F\,\mathcal R_{q,s}\kern0.5pt F}:=\sum_{k=2}^\infty\epsilon_k\isd{|F_{(k)}|^2}
\]
where
\[
\epsilon_k:=\frac{\gamma_k\big(\tfrac n{q_\star}\big)-1}{q_\star-2}-\frac{\gamma_k\big(\tfrac nq\big)-1}{q-2}
\]
is positive for any $k\ge2$ according to Lemma~\ref{Lem:MonotonicityGamma}.\end{proof}

Equality in~\eqref{interpolation} is realized only when $F$ optimizes the critical fractional Sobolev inequality and, if~$q<q_\star$, when $F_{(k)}=0$ for any $k\ge2$, which is impossible unless $F$ is an optimal function for the Poincar\'e inequality of Lemma~\ref{Lem:Poincare}. This observation will be further exploited in Section~\ref{Sec:Conclusion}.

\subsection{Fractional logarithmic Sobolev inequalities}\label{Sec:LogSob}

\begin{proof}[Proof of Corollary~\ref{corlog}] According to Theorem~\ref{Thm:Main1}, we know by~\eqref{interpolation} that
\[
\frac{\nrm Fq^2-\nrm F2^2}{q-2}\le\frac{n-s}{2\,s}\,\kappa_{n,s}\isd{F\,\mathcal L_s\kern0.5pt F}
\]
for any function $F\in\mathrm H^{s/2}(\S^n)$ and any $q\in[1,2)\cup(2,q_\star)$ with $q_\star=q_\star(s)$ given by~\eqref{CriticalExponent} (and the convention that $q_\star=\infty$ if $s=n$). Taking the limit as $q\to2$ for a given $s\in(0,n)$, we obtain that~\eqref{logsob} holds with $\mathsf C_{2,s}\le\frac{n-s}{2\,s}\,\kappa_{n,s}$. The reverse inequality has already been shown in~\eqref{Ineq:LowerEstimateC} written with $q=2$.\end{proof}

Let us comment on the results of Corollary~\ref{corlog}, in preparation for Section~\ref{Sec:Conclusion}. Instead of fixing $s$ and letting $q\to2$ as in the proof of Corollary~\ref{corlog}, we can consider the case $q=q_\star(s)$ and let $s\to0$, or equivalently rewrite~\eqref{Sobolev} as
\[
\frac{\nrm Fq^2-\nrm F2^2}{q-2}\le\sum_{k=0}^\infty\frac{\gamma_k\big(\tfrac nq\big)-1}{q-2}\isd{|F_{(k)}|^2}\,,
\]
and take the limit as $q\to2$. By an endpoint differentiation argument, we recover the conformally invariant \emph{fractional logarithmic Sobolev} inequality
\be{LogSob}
\isd{F^2\,\log\(\frac{|F|}{\nrm F2}\)}\le\frac n2\isd{F\,\mathcal K_0'\kern0.5pt F}
\ee
as in~\cite{MR1164616,MR1441924}, where the differential operator $\mathcal K_0'$ is the endpoint derivative of $\mathcal K_s$ at $s=0$. The equality $\mathcal K_0'=\mathcal L_0'$ holds because $\kappa_{n,0}=1$ and $\mathcal K_0=\mathrm{Id}$. More specifically the right-hand side of~\eqref{LogSob} can be written using the identities
\[
\isd{F\,\mathcal K_0'\kern0.5pt F}=\isd{F\,\mathcal L_0'\kern0.5pt F}
=\frac12\,\sum_{k=0}^\infty \alpha_k\big(\tfrac n2\big) \isd{|F_{(k)}|^2}
\]
with
\[
\alpha_k\big(\tfrac n2\big)=-\,\gamma_k'\big(\tfrac n2\big)=\sum_{j=0}^{k-1}\frac4{n+2\,j}\,.
\]

Inequality~\eqref{LogSob} is sharp, and equality holds if and only if $F$ is obtained by applying any conformal transformation on $\S^n$ to constant functions. Finally, let us notice that~\eqref{LogSob} can be recovered as an endpoint of~\eqref{logsob} by letting $s\to0$. The critical case is then achieved as a limit of the subcritical inequalities~\eqref{logsob}. The optimal constant can be identified, but the set of optimal functions in the limit is larger than in the subcritical regime, because of the conformal invariance.

Even more interesting is the fact that the \emph{fractional logarithmic Sobolev} inequality is critical for $s=0$ and $q=2$ but subcritical inequalities corresponding to $q\in[1,2)$ still make sense.
\begin{cor}\label{Cor:s=0} Assume that $n\ge1$ and $q\in[1,2)$. For any $F\in\mathrm L^2(\S^n)$ such that $\isd{F\,\mathcal K_0'\kern0.5pt F}$ is finite, we have
\[
\frac{\nrm Fq^2-\nrm F2^2}{q-2}\le\frac n2\isd{F\,\mathcal K_0'\kern0.5pt F}\,.
\]\end{cor}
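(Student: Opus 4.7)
The plan is to adapt the strategy that produced Theorem~\ref{Thm:Main1}, but starting from the spectral form~\eqref{HLS2} of the sharp Hardy--Littlewood--Sobolev inequality (which takes the role of the fractional Sobolev inequality in the limiting case $s=0$) and invoking the same monotonicity Lemma~\ref{Lem:MonotonicityGamma}. First I would restrict attention to $q\in(1,2)$. Taking~\eqref{HLS2} with $p=q$, subtracting $\nrm F2^2=\sum_{k\ge 0}\isd{|F_{(k)}|^2}$ from both sides and dividing by the negative quantity $q-2$ reverses the inequality and gives
\[
\frac{\nrm Fq^2-\nrm F2^2}{q-2}\le\sum_{k=1}^\infty\frac{\gamma_k(n/q)-1}{q-2}\isd{|F_{(k)}|^2}\,,
\]
where the $k=0$ term vanishes because $\gamma_0\equiv 1$.

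Next I would apply Lemma~\ref{Lem:MonotonicityGamma}: for each $k\ge 2$ the coefficient $(\gamma_k(n/q)-1)/(q-2)$ is strictly increasing in $q\in(1,\infty)$, while for $k=1$ the identity $\gamma_1(n/q)=q-1$ makes it equal to $1$ identically. Since $\gamma_k(n/2)=1$, letting $q\to 2^-$ identifies the supremum on $(1,2)$ as the one-sided derivative
\[
\lim_{r\to 2^-}\frac{\gamma_k(n/r)-1}{r-2}=\frac{d}{dq}\gamma_k\big(\tfrac nq\big)\Big|_{q=2}=-\,\frac n4\,\gamma_k'\big(\tfrac n2\big)=\frac n4\,\alpha_k\big(\tfrac n2\big)\,.
\]
Combined with the spectral identity $\isd{F\,\mathcal K_0'\kern0.5pt F}=\tfrac12\sum_{k\ge 0}\alpha_k(n/2)\isd{|F_{(k)}|^2}$ recorded just before the statement of the corollary, this produces the desired inequality for every $q\in(1,2)$. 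The $k=1$ case matches the eigenvalue bound with equality, since $\alpha_1(n/2)=4/n$ gives $\tfrac n4\,\alpha_1(n/2)=1$, so nothing is lost.

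The endpoint $q=1$ is where I expect the main obstacle: the Hardy--Littlewood--Sobolev inequality~\eqref{HLS2} degenerates as $p\to 1^+$ (one has $\lambda=2n/p'\to 0$) and so cannot be fed into the scheme directly. My plan is simply to pass to the limit $q\to 1^+$ in the inequality just established. Because $\S^n$ carries a probability measure, the pointwise bound $|F|^q\le 1+|F|^2$ furnishes an $\mathrm L^1$ dominating function, and Lebesgue's theorem forces $\nrm Fq\to\nrm F1$ for every $F\in\mathrm L^2(\S^n)$. The left-hand side therefore converges to $\nrm F2^2-\nrm F1^2$, while the right-hand side is independent of $q$, closing the case $q=1$. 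The entire argument thus rests on Lemma~\ref{Lem:MonotonicityGamma}, whose strict convexity of $q\mapsto\gamma_k(n/q)$ ensures that the supremum over $(1,2)$ coincides with the one-sided derivative at $q=2$ and therefore carries all the combinatorial weight.
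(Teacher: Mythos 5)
Your argument is correct and is exactly the route the paper intends (the paper merely remarks that the proof relies on Lemma~\ref{Lem:MonotonicityGamma} and leaves the details to the reader): you start from the spectral form~\eqref{HLS2} of Hardy--Littlewood--Sobolev with $p=q$, bound each coefficient $(\gamma_k(n/q)-1)/(q-2)$ for $q\in(1,2)$ by its limit $\tfrac n4\,\alpha_k\big(\tfrac n2\big)$ as $q\to2^-$ using the monotonicity of Lemma~\ref{Lem:MonotonicityGamma}, and check $k=1$ by hand via $\gamma_1(n/q)=q-1$ and $\alpha_1(n/2)=4/n$. Your handling of the endpoint $q=1$ by dominated convergence (with dominating function $1+|F|^2$ on the probability space $\S^n$) is also sound.
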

As for Corollary~\ref{corlog}, the proof relies on Lemma~\ref{Lem:MonotonicityGamma}. Details are left to the reader.

\section{Stereographic projection and weighted fractional interpolation inequalities on the Euclidean space}\label{Sec:Euclidean}

This section is devoted to the proof of Theorem~\ref{Thm:Main2}. Various results concerning the extension of the Caffarelli-Kohn-Nirenberg inequalities introduced in~\cite{Caffarelli-Kohn-Nirenberg-84} (see also~\cite[Theorem~1]{delPino20102045} in our context) are scattered throughout the literature, and one can consult for instance ~\cite[Theorem 1.8]{MR2869807} for a quite general result in this direction. However, very little is known so far on optimal constants or even estimates of such constants, except for some limit cases like fractional Sobolev or fractional Hardy-Sobolev inequalities (see, \emph{e.g.}, \cite{MR3334182}). What we prove here is that the interpolation inequalities on the sphere provide inequalities on the Euclidean space with weights based on $(1+|x|^2)$, with optimal constants.

\begin{proof}[Proof of Theorem~\ref{Thm:Main2}] Let us consider the stereographic projection $\mathcal S$, whose inverse is defined by
\[
\mathcal S^{\kern0.5pt-1}:\R^n\rightarrow\S^n\,,\quad x\longmapsto\zeta=\(\frac{2\,x}{1+|x|^2},\,\frac{1-|x|^2}{1+|x|^2}\)\,.
\]
with Jacobian determinant $|J|=2^n\,(1+|x|^2)^{-n}$. Given $s\in(0,n)$ and $q\in(2,q_\star)$, and using the conformal Laplacian, we can write inequality~\eqref{interpolation} as
\[
\nrm Fq^2-\frac{q_\star-q}{q_\star-2}\,\nrm F2^2\le\frac{q-2}{q_\star-2}\,\kappa_{n,s}\isd{F\,\mathcal A_s\kern0.5pt F}
\]
where $\mathcal A_s$ and the fractional Laplacian on $\R^n$ are related by
\[
|J|^{1-\frac1{q_\star}}\,(\mathcal A_s\kern0.5ptF)\circ\mathcal S^{\kern0.5pt-1}=(-\Delta)^{s/2}\,\Big(|J|^\frac1{q_\star}\,F\circ\mathcal S^{\kern0.5pt-1}\Big)\,.
\]
Then the interpolation inequality \eqref{interpolation} on the sphere is equivalent to the following \emph{fractional interpolation inequality on the Euclidean space}
\begin{multline*}
|\S^n|^{1-\frac 2q}\(\ird{|f|^q\,|J|^{1-\frac q{q_\star}}}\)^{\frac2q}-\frac{q_\star-q}{q_\star-2}\ird{f^2\,|J|^{1-\frac2{q_\star}}}\\
\le\frac{q-2}{q_\star-2}\,\kappa_{n,s}\ird{f\,(-\Delta)^{s/2}\kern0.5ptf}
\end{multline*}
by using the change of variables $F\longmapsto f=|J|^{1/q_\star}\,F\circ\mathcal S^{\kern0.5pt-1}$. The equality case is now achieved only by $f=|J|^{1/q_\star}$ for any $q\in(2,q_\star)$, up to a multiplication by a constant, and the inequality is equivalent to~\eqref{fCKNopt}.\end{proof}

\section{Concluding remarks}\label{Sec:Conclusion}

A striking feature of inequality~\eqref{interpolation} is that the optimal constant $\mathsf C_{q,s}$ is determined by a linear eigenvalue problem, although the problem is definitely nonlinear. This deserves some comments. Let $q\in[1,2)\cup(2,q_\star)$ if $s<n$ and $q\in[1,2)\cup(2,\infty)$ if $s=n$. With $\mathcal Q$ defined by~\eqref{Q} on $\mathscr H^{s/2}$, the subset of the functions in $\mathrm H^{s/2}(\S^n)$ which are not almost everywhere constant, we investigate the relation
\[
\mathsf C_{q,s}\inf_{F\in\mathscr H^{s/2}}\mathcal Q[F]=1\,.
\]
Notice that both numerator and denominator of $\mathcal Q[F]$ converge to $0$ if $F$ approaches a constant, so that $\mathcal Q$ becomes undetermined in the limit. As we shall see next, this happens for a minimizing sequence and explains why a linearized problem appears in the limit.

By compactness of the Sobolev embedding $\mathrm H^{s/2}(\S^n)\hookrightarrow\mathrm L^q(\S^n)$ (see \cite{MR0450957,MR2869807} for fundamental properties of fractional Sobolev spaces, \cite[sections~6 and~7]{MR2944369} and~\cite{MR3445279} for application to variational problems), any minimizing sequence $(F_n)_{n\in\N}$ for $\mathcal Q$ is relatively compact if we assume that $\nrm{F_n}q=1$ for any $n\in\N$. This normalization can be imposed without loss of generality because of the homogeneity of $\mathcal Q$. Hence $(F_n)_{n\in\N}$ converges to a limit $F\in\mathrm H^{s/2}(\S^n)$. Assume that $F$ is not a constant. Then the denominator in $\mathcal Q[F]$ is positive and by semicontinuity we know that
\[
\isd{F\,\mathcal L_s\kern0.5pt F}\le\lim_{n\to+\infty}\isd{F_n\,\mathcal L_s\kern0.5pt F_n}\,.
\]
On the other hand, by compactness, up to the extraction of a subsequence, we have that
\[
\nrm F2^2=\lim_{n\to+\infty}\nrm{F_n}2^2\quad\mbox{and}\quad\nrm Fq^2=\lim_{n\to+\infty}\nrm{F_n}q^2=1\,.
\]
Hence $F$ is optimal and solves the Euler-Lagrange equations
\[
(q-2)\,\mathsf C_{q,s}\,\mathcal L_s\kern0.5pt F+F=F^{q-1}\,.
\]
Using Corollary~\ref{Cor:improvedInterpolation}, we also get that $F$ lies in the kernel of $\mathcal R_{q,s}$, that is, the space generated by the spherical harmonics corresponding to $k=0$ and $k=1$. From the Euler-Lagrange equations, we read that $F$ has to be a constant. Because of the normalization $\nrm Fq=1$, we obtain that $F=1$ a.e., a contradiction.

Hence $(F_n)_{n\in\N}$ converges to $1$ in $\mathrm H^{s/2}(\S^n)$. With $\varepsilon_n=\|1-F_n\|_{\mathrm H^{s/2}(\S^n)}$ and $v_n:=(F_n-1)/\varepsilon_n$, we can write that
\[
F_n=1+\varepsilon_n\,v_n\quad\mbox{with}\quad\|v_n\|_{\mathrm H^{s/2}(\S^n)}=1\quad\forall\,n\in\N
\]
and
\[
\lim_{n\to+\infty}\varepsilon_n=0\,.
\]
On the other hand, $(F_n)_{n\in\N}$ being a minimizing sequence, it turns out that
\[
\mathsf C_{q,s}^{-1}=\lim_{n\to+\infty}\mathcal Q[F_n]=\lim_{n\to+\infty}\frac{\varepsilon_n^2\,(q-2)\isd{v_n\,\mathcal L_s\kern0.5pt v_n}}{\nrm{1+\varepsilon_n\,v_n}q^2-\nrm {1+\varepsilon_n\,v_n}2^2}\,.
\]
If $q>2$, an elementary computation shows that
\be{Taylor}
\nrm{1+\varepsilon_n\,v_n}q^2-\nrm {1+\varepsilon_n\,v_n}2^2=(q-2)\,\varepsilon_n^2\,\nrm{v_n-\bar v_n}2^2(1+o(1))
\ee
as $n\to+\infty$, where $\bar v_n:=\isd{v_n}$, so that
\[
\mathsf C_{q,s}^{-1}=\lim_{n\to+\infty}\mathcal Q[F_n]=\lim_{n\to+\infty}\frac{\isd{v_n\,\mathcal L_s\kern0.5pt v_n}}{\nrm{v_n-\bar v_n}2^2}\,.
\]
Details on the Taylor expansion used in~\eqref{Taylor} can be found in Appendix~\ref{Appendix:Taylor}. When $q\in[1,2)$, we can estimate the denominator by restricting the integrals to $\{x\in\S^n\,:\,\varepsilon_n\,|v_n|<1/2\}$ and Taylor expand $t\mapsto(1+t)^q$ on $(1/2,3/2)$.

Notice that by $F_n$ being a function in $\mathscr H^{s/2}$, we know that $\nrm{v_n-\bar v_n}2>0$ for any $n\in\N$, so that the above limit makes sense. With the notations of Section~\eqref{SubSec:Poincare}, we know that
\[
\mathsf C_{q,s}^{-1}\ge\inf_{v\in\mathscr H^{s/2}}\frac{\isd{v\,\mathcal L_s\kern0.5pt v}}{\nrm{v-\bar v}2^2}\ge\lambda_1(\mathcal L_s)=\frac{2\,s\,\kappa_{n,s}}{n-s}
\]
according to the Poincar\'e inequality of Lemma~\ref{Lem:Poincare}, which proves that we actually have equality in~\eqref{Ineq:LowerEstimateC} and determines $\mathsf C_{q,s}$.

Additionally, we may notice that $(v_n)_{n\in\N}$ has to be a minimizing sequence for the Poincar\'e inequality, which means that up to a normalization and after the extraction of a subsequence, $v_n-\bar v_n$ converges to a spherical harmonic function associated with the component corresponding to $k=1$. This explains why we obtain that $\mathsf C_{q,s}\,\lambda_1(\mathcal L_s)=1$.

The above considerations have been limited to the subcritical range $q<q_\star$ if $s<n$ and $q<+\infty$ if $s=n$. However, the critical case of the Sobolev inequality can be obtained by passing to the limit as $q\to q_\star$ (and even the Onofri type inequalities when $s=n$) so that the optimal constants are also given by an eigenvalue in the critical case. However, due to the conformal invariance, the constant function $F\equiv1$ is not the only optimal function. At this point it should be noted that the above considerations heavily rely on Corollary~\ref{Cor:improvedInterpolation} and, as a consequence, cannot be used to give a variational proof of Theorem~\ref{Thm:Main1}.

\medskip Although the subcritical interpolation inequalities of this paper appear weaker than inequalities corresponding to a critical exponent, we are able to identify the equality cases and the optimal constants. We are also able to keep track of a remainder term which characterizes the functions realizing the optimality of the constant or, to be precise, the limit of any minimizing sequence and its first order correction. This first order correction, or equivalently the asymptotic value of the quotient $\mathcal Q$, determines the optimal constant and explains the role played by the eigenvalues in a problem which is definitely nonlinear.

\appendix\section{The spectrum of the fractional Laplacian}\label{Appendix:Spectrum}

The standard approach for computing $\gamma_k$ in~\eqref{gamma} relies on the Funk-Hecke formula as it is detailed in~\cite[Section~4]{MR2848628}. In this appendix, for completeness, we provide a simple, direct proof of the expression of $\gamma_k$. For this purpose, we compute the eigenvalues $\lambda_k=\lambda_k\big((-\Delta)^{s/2}\big)$ of the fractional Laplacian on $\R^n$, that is,
\[
(-\Delta)^{s/2}\kern0.5pt f_k=\frac{\lambda_k}{(1+|x|^2)^s}\,f_k\quad\mbox{in}\quad\R^n\,,
\]
for any $k\in\N$. We shall then deduce the eigenvalues of $\mathcal L_s$. This determines the optimal constant in~\eqref{Sobolev} and~\eqref{interpolation} without using Lieb's duality and without relying on the symmetry of the optimal case in~\eqref{HLS} as in~\cite{MR717827}.
\begin{prop}\label{Prop:Spectrum} Given $s\in(0,n)$, the spectrum of the fractional Laplacian is
\[
\lambda_k\big((-\Delta)^{s/2}\big)=2^s\,\frac{\Gamma(k+\frac n{q'})}{\Gamma(k+\frac nq)}=2^s\,\lambda_k(\mathcal A_s)=2^s\,\frac{\Gamma(\frac n{q'})}{\Gamma(\frac nq)}\,\lambda_k(\mathcal K_s)\,.
\]
\end{prop}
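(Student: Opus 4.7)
The plan is to base the argument entirely on the conformal covariance of the fractional Laplacian under stereographic projection, as already used in the proof of Theorem~\ref{Thm:Main2}, together with the spectral decomposition~\eqref{gamma} of $\mathcal K_s$ on spherical harmonics. In short, I will transport the known eigenvalues of $\mathcal K_s$ to weighted eigenvalues of $(-\Delta)^{s/2}$ on~$\R^n$ via $\mathcal S$, and then read off the answer in the $(q,q')$ parametrization.

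First I would fix a spherical harmonic $F_k$ of degree $k$ on $\S^n$ and define $f_k := |J|^{1/q_\star}\,F_k\circ\mathcal S^{-1}$, with $|J| = 2^n(1+|x|^2)^{-n}$ and $q_\star = 2n/(n-s)$. By~\eqref{gamma}, $\mathcal K_s F_k = \gamma_k(n/q_\star)\,F_k$, so $\mathcal A_s F_k = \lambda_k(\mathcal A_s)\,F_k$ with $\lambda_k(\mathcal A_s) = \gamma_k(n/q_\star)/\kappa_{n,s}$.

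Next, the conformal covariance identity $|J|^{1-1/q_\star}(\mathcal A_s F)\circ\mathcal S^{-1} = (-\Delta)^{s/2}(|J|^{1/q_\star}F\circ\mathcal S^{-1})$ applied to $F = F_k$ gives $(-\Delta)^{s/2} f_k = \lambda_k(\mathcal A_s)\,|J|^{1-2/q_\star}\,f_k$. Since $1 - 2/q_\star = s/n$, one has $|J|^{1-2/q_\star} = 2^s(1+|x|^2)^{-s}$, which produces exactly the weighted eigenvalue equation on~$\R^n$ and yields $\lambda_k((-\Delta)^{s/2}) = 2^s\,\lambda_k(\mathcal A_s)$.

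Finally, substituting $n/q = n/q_\star = (n-s)/2$, $n/q' = (n+s)/2$ and $\kappa_{n,s} = \Gamma(n/q)/\Gamma(n/q')$ into the explicit form of $\gamma_k$, the ratio $\Gamma(n/q)/\Gamma(n/q')$ cancels against $\kappa_{n,s}$, leaving $\lambda_k(\mathcal A_s) = \Gamma(k+n/q')/\Gamma(k+n/q)$. This is the first claimed equality; the third follows from $\mathcal A_s = \mathcal K_s/\kappa_{n,s}$. The only genuinely nontrivial ingredient is the conformal covariance identity, which is classical but must be justified with some care for the fractional operator; all remaining steps are bookkeeping with gamma functions and the elementary relation $1 - 2/q_\star = s/n$.
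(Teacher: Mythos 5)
Your computations are all correct: the covariance identity does turn $\mathcal A_s F_k=\lambda_k(\mathcal A_s)F_k$ into $(-\Delta)^{s/2}f_k=\lambda_k(\mathcal A_s)\,|J|^{1-2/q_\star}f_k$, the identity $|J|^{1-2/q_\star}=2^s(1+|x|^2)^{-s}$ follows from $1-2/q_\star=s/n$, and the Gamma-function bookkeeping with $\kappa_{n,s}=\Gamma(\frac n{q_\star})/\Gamma(n-\frac n{q_\star})$ is right. But your argument runs in exactly the opposite direction from the paper's, and this matters. The appendix announces that its purpose is to give a \emph{direct} computation of $\gamma_k$ that avoids the Funk-Hecke formula (and Lieb's duality): the paper computes $\widehat{f_k^\mu}$ for $f_k^\mu=C_k^{(\alpha)}(z)\,(1+|x|^2)^{-\mu}$ from scratch, via the Hankel transform, Lieb's formula for the Fourier transform of $(1+|x|^2)^{-\mu}$, the Gegenbauer expansion, and a recurrence for the modified Bessel functions $K_\nu$ leading to the key identity $I^{\mu_1}_{n,k}=I^{\mu_2}_{n,k}$ with $\mu_1+\mu_2=n$. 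The Euclidean weighted spectrum is thereby obtained independently, and the spectrum of $\mathcal K_s$ (hence $\gamma_k$) is \emph{deduced} from it. You instead take $\gamma_k$ as given by~\eqref{gamma}, i.e.\ by the Funk-Hecke computation the appendix is explicitly trying to bypass.

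More seriously, the one ingredient you defer -- the conformal covariance identity $|J|^{1-1/q_\star}(\mathcal A_s F)\circ\mathcal S^{-1}=(-\Delta)^{s/2}\big(|J|^{1/q_\star}F\circ\mathcal S^{-1}\big)$ for a fractional, spectrally defined $\mathcal A_s$ -- is not mere "care": for non-integer $s$ its standard proofs (via Branson's intertwining operators, or via Lieb's analysis) are established precisely by computing the action of both sides on the functions $f_k^\mu$, which is the computation the Proposition encapsulates. So, taken as a self-contained proof, your argument is essentially circular: the nontrivial content has been moved into the cited identity. Within the paper's own economy your derivation is a legitimate consistency check (the covariance identity is indeed invoked without proof in Section~\ref{Sec:Euclidean}), and it is a tidy way to see why the factor $2^s$ and the weight $(1+|x|^2)^{-s}$ appear; but it cannot replace the paper's proof, whose whole point is to supply the independent Fourier-analytic computation that your argument presupposes.
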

\begin{proof} Using the stereographic projection and a decomposition in spherical harmonics, we can reduce the problem of computing the spectrum to the computation of the spectrum associated with the eigenfunctions
\[
f_k^\mu(x)=C_k^{(\alpha)}(z)\,(1+|x|^2)^{-\mu}\quad\mbox{with}\quad z=\tfrac{1-|x|^2}{1+|x|^2}\,,
\]
where $\mu=\lambda/2=(n-s)/2$, $\alpha=(n-1)/2$ and $C_k^{(\alpha)}$ denotes the Gegenbauer polynomials. Let $\hat f(\xi)=(\mathscr Ff)(\xi):=\ird{f(x)\,e^{-\,2\,\pi\,i\,\xi\cdot x}}$ be the Fourier transform of a function $f$. Since the functions are radial, by the Hankel transform $\mathcal H^{\frac n2-1}$, we get that
\[
\widehat{f_k^\mu}(\xi)=\frac{2\,\pi}{|\xi|^{\frac n2-1}}\int_0^\infty f_k^\mu(r)\,J_{\frac n2-1}\big(2\,\pi\,r\,|\xi|\big)\,r^{\frac n2}\,dr
\]
(\emph{cf.}~\cite[Appendix B.5, p.~578]{MR3243734}) where $J_\nu$ is the Bessel function of the first kind.

The Fourier transform of $f_0^\mu=(1+|x|^2)^{-\mu}$ has been calculated, \emph{e.g.}, by E.~Lieb in~\cite[(3.9)-(3.14)]{MR717827} in terms of the modified Bessel functions of the second kind $K_\nu$ as
\[
\widehat{f_0^\mu}(\xi)=\frac{\pi^{\frac n2}\,2^{1+\frac2n-\mu}}{\Gamma(\mu)}\,\big(2\,\pi\,|\xi|\big)^{\mu-\frac n2}\,K_{\mu-\frac n2}\big(2\,\pi\,|\xi|\big)\,.
\]
This is a special case of the modified Weber-Schafheitlin integral formula in~\cite[Chapter XIII, Section 13.45, p.~ 410]{MR1349110}. Using the expansion of Gegenbauer polynomials, we get
\begin{eqnarray*}
\widehat{f_k^\mu}(\xi)&=&\frac{2\,\pi}{\Gamma(\frac{n-1}2)\,|\xi|^{\frac n2-1}}\sum_{j=0}^{[\frac k2]}\sum_{l=0}^{k-2\,j}\Bigg[\frac{(-1)^{j+k-l}\,2^{k+l-2\,j}\,\Gamma(\frac{n-1}2+k-j)}{j!\,k!\,(k-2\,j-l)!}\\
&&\hspace*{4cm}\times\int_0^\infty(1+r^2)^{-(\mu+l)}\,J_{\frac n2-1}\big(2\,\pi\,r\,|\xi|\big)\,r^{\frac n2}\,dr\Bigg]\\
&=&\frac1{\Gamma(\frac{n-1}2)}\sum_{j=0}^{[\frac k2]}\sum_{l=0}^{k-2\,j}\frac{(-1)^{j+k-l}\,2^{k+l-2\,j}\,\Gamma(\frac{n-1}2+k-j)}{j!\,l!\,(k-2\,j-l)!}\,\widehat{f_0^{\mu+l}}(\xi)\\
&=&\frac{2^{1+\frac2n-\mu}\,\pi^{\frac n2}\big(2\,\pi\,|\xi|\big)^{\mu-\frac n2}}{\Gamma(\frac{n-1}2)\,\Gamma(\mu+k)}\,I_{n,k}^\mu(|\xi|)\,,
\end{eqnarray*}
where
\begin{eqnarray*}
I_{n,k}^\mu(|\xi|)&:=&\sum_{l=0}^kc_{n,k,l}\,\frac{\Gamma(\mu+k)}{\Gamma(\mu+l)}\,\big(2\,\pi\,|\xi|\big)^l\,K_{\mu-\frac n2+l}\big(2\,\pi\,|\xi|\big)\,,\\
\text{and}\qquad c_{n,k,l}&:=&\frac1{l!}\sum_{j=0}^{[\frac{k-l}2]}\,\frac{(-1)^{j+k-l}\,2^{k-2\,j}\,\Gamma(\frac{n-1}2+k-j)}{j!\,(k-2\,j-l)!}\,.
\end{eqnarray*}
{}From the recurrence relation
\[
x\,(K_{\nu-1}-K_{\nu+1})=-\,2\,\nu\,K_\nu\,,
\]
we deduce the identity
\[
\sum_{l=0}^k\,c_{n,k,l}\,x^l\,\(\frac{\Gamma(\nu+\frac n2+k)}{\Gamma(\nu+\frac n2+l)}\,K_{\nu+l}(x)-\frac{\Gamma(-\nu+\frac n2+k)}{\Gamma(-\nu+\frac n2+l)}\,K_{\nu-l}(x)\)=0\quad\forall\,k\ge0
\]
and observe that
\[
I^{\mu_1}_{n,k}=I^{\mu_2}_{n,k}\quad\forall\,k\in\N
\]
if $\mu_1=\lambda/2$ and $\mu_2=\lambda/2+s$, so that $\mu_1+\mu_2=n$ and $\mu_1-\mu_2=-\,s$. It remains to observe that
\[
(2\,\pi\,|\xi|)^s\,\widehat{f_k^{\lambda/2}}=\lambda_k\,\mathscr F\(f_k^{\lambda/2}\,(1+|x|^2)^{-s}\)\quad\mbox{with}\quad\lambda_k=2^s\,\frac{\Gamma(k+\frac n{q'})}{\Gamma(k+\frac nq)}\,.
\]
\end{proof}

\section{A Taylor formula with integral remainder term}\label{Appendix:Taylor}

Let us define the function $r:\R\to\R$ such that
\[
|1+t|^q=1+q\,t+\frac12\,q\,(q-1)\,t^2+r(t)\quad\forall\,t\in\R\,.
\]
\begin{lem}\label{Lem:Taylor} Let $q\in(2,\infty)$. With the above notations, there exists a constant $C>0$ such that
\[|r(t)| \le \, \left\{\begin{array}{cr}
C\, |t|^3 & \text{if}~\, |t|\le 1 \\
C\, |t|^q & \text{if}~\, |t|\ge 1 \\
\end{array}\right.\]
\end{lem}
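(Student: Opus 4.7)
The plan is to establish the two bounds separately, according to whether $|t|\le 1$ or $|t|\ge 1$.

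For $|t|\le 1/2$, I would stay away from the non-smooth point $t=-1$ of the map $t\mapsto|1+t|^q$ and apply Taylor's theorem with integral remainder to the $C^\infty$ function $f(t):=(1+t)^q$ on $(-1,\infty)$. Its third derivative $f'''(t)=q\,(q-1)\,(q-2)\,(1+t)^{q-3}$ is uniformly bounded on $[-1/2,1/2]$, which yields the integral representation
\[
r(t)=\int_0^t\frac{(t-s)^2}{2}\,q\,(q-1)\,(q-2)\,(1+s)^{q-3}\,ds,
\]
and hence $|r(t)|\le C\,|t|^3$ on this interval. To extend the bound to the remaining annulus $1/2\le |t|\le 1$, I would use the fact that $r$ is continuous on the compact set $\{1/2\le|t|\le 1\}$ (this includes the boundary point $t=-1$, which is harmless because $|1+t|^q\to 0$ as $t\to -1$ for $q>2$), together with the elementary bound $|t|^3\ge 1/8$ on this set, which gives $|r(t)|\le 8\,\max_{1/2\le |s|\le 1}|r(s)|\cdot|t|^3$.

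For $|t|\ge 1$, I would use crude pointwise estimates: $|1+t|^q\le(1+|t|)^q\le 2^q\,|t|^q$, while each of the three terms $1$, $q\,t$, $\tfrac12 q\,(q-1)\,t^2$ in the Taylor polynomial is bounded by a constant multiple of $|t|^2$, which in turn is bounded by a constant multiple of $|t|^q$ since $q>2$ and $|t|\ge 1$. Summing these yields the desired estimate $|r(t)|\le C\,|t|^q$.

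The argument is essentially routine; the only mildly delicate point is to ensure that the Taylor expansion stays away from the singular point $t=-1$, where $(1+t)^{q-3}$ fails to be bounded when $q<3$. This is resolved by restricting the integral-remainder step to $|t|\le 1/2$ and patching with a compactness argument on the annulus $1/2\le |t|\le 1$, which suffices because both $r$ and $|t|^3$ are well controlled there.
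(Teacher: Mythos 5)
Your proof is correct. It uses the same basic tool as the paper (Taylor's formula with integral remainder for $f(t)=(1+t)^q$), but deploys it differently. The paper keeps the integral representation of $r(t)$ everywhere, rescales it to the form $\tfrac12\,q\,(q-1)\,(q-2)\,t^q\int_0^1(1-\sigma)^2\,|\tfrac1t+\sigma|^{q-4}(\tfrac1t+\sigma)\,d\sigma$, and then runs a four-case analysis ($t\ge1$, $0<t<1$, $-1<t<0$, $t\le-1$) with explicit sign and size estimates on $\tfrac1t+\sigma$; this yields explicit constants and even one-sided (sign) information on $r$, but it has to navigate the integrable singularity of $f'''$ at the point $1+\sigma t=0$ when $t\le-1$ and $q<3$. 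You instead confine the integral-remainder step to $|t|\le 1/2$, where $f'''$ is uniformly bounded and no singularity issue arises, and then patch: continuity plus compactness on the annulus $1/2\le|t|\le1$ (where $|t|^3$ is bounded below), and the crude triangle-inequality bound $|1+t|^q\le 2^q|t|^q$ together with $|t|^2\le|t|^q$ for $|t|\ge1$. Your route is more robust and avoids the delicate point near $t=-1$ entirely, at the (harmless) cost of a non-explicit constant on the annulus; the paper's route is more computational but produces explicit constants and the sign of $r(t)$ in each regime, which is more information than the lemma actually asserts. Both are complete proofs of the stated estimate.
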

This result is elementary but crucial for the expansion of $\nrm Fq^2-\nrm F2^2$ around $F=1$. This is why we give a proof with some details, although we claim absolutely no originality for that. Similar computations have been repeatedly used in a related context, \emph{e.g.}, in~\cite{MR3179693,christ2014sharpened,MR3429269}.

\begin{proof} Using the Taylor formula with integral remainder term
\[
f(t)=f(0)+f'(0)\,t+\frac12\,f''(0)\,t^2+\frac12\int_0^t(t-s)^2\,f'''(s)\,ds
\]
applied to $f(t)=(1+t)^q$ with $q>2$, we obtain that
\[
|1+t|^q=1+q\,t+\frac12\,q\,(q-1)\,t^2+r(t)
\]
where the remainder term is given by
\[
r(t)=\frac12\,q\,(q-1)\,(q-2)\,t^q\int_0^1(1-\sigma)^2\,\left|\frac1t+\sigma\right|^{q-4}\(\frac1t+\sigma\right)\,d\sigma\,.
\]
Hence the remainder term can be bounded as follows:
\begin{enumerate}
\item[(i)] if $t\ge1$, using $\sigma<\frac1t+\sigma<1+\sigma$, we get that
\[
0<r(t)<c_q\,t^q
\]
with $c_q=\frac12\,q\,(q-1)\,(q-2)\int_0^1(1-\sigma)^2\,\max\{\sigma^{q-3},(1+\sigma)^{q-3}\}\,d\sigma$.

\item[(ii)] if $0<t<1$, using $\frac1t<\frac1t+\sigma<\frac2t$, we get that
\[
0<r(t)<\frac16\,q\,(q-1)\,(q-2)\,\max\{1,2^{q-3}\}\,t^3\,.
\]

\item[(iii)] if $-1<t<0$, using $\frac1t<\frac1t+\sigma<\frac1t+1<0$, we get that
\[
-\frac16\,q\,(q-1)\,(q-2)\,|t|^3<r(t)<0\,.
\]

\item[(iv)] if $t\le-1$, using $\sigma-1<\frac1t+\sigma<\sigma$, we get that
\[
-\frac12\,(q-1)\,(q-2)\,t^q<r(t)<t^q\,.
\]
\end{enumerate}
\end{proof}

\section{Notations and ranges}\label{Appendix:Notations}

For the convenience of the reader, this appendix collects various notations which are used throughout this paper and summarizes the ranges covered by the parameters.

The identity
\[
\lambda=\frac{2\,n}{p'}\quad\mbox{where}\quad\frac1p+\frac1{p'}=1
\]
means that
\[
p=\frac{2\,n}{2\,n-\lambda}\,.
\]
With
\[
\lambda=n-s\,,
\]
we have
\[
p=\frac{2\,n}{n+s}\quad\mbox{and}\quad p'=q_\star=\frac{2\,n}{n-s}\,.
\]
The limiting values of the parameters are summarized in Table~\ref{Table:1}.
\begin{table}[ht]
\begin{tabular}{|c|ccc|}\hline
$s$&$0$&$2$&$n$\\\hline
$\lambda$&$n$&$n-2$&$0$\\\hline
$p$&$2$&$\frac{2\,n}{n+2}$&$1$\\\hline
$p'=q_\star$&$2$&$\frac{2\,n}{n-2}$&$+\infty$\\\hline
\end{tabular}\vspace*{6pt}
\caption{\sl\label{Table:1}Correspondence of the limiting values of the parameters.}
\end{table}

The coefficients $\gamma_k$ and $\delta_k$ defined by
\begin{multline*}
\gamma_k(x)=\frac{\Gamma(x)\,\Gamma(n-x+k)}{\Gamma(n-x)\,\Gamma(x+k)}\\
\mbox{and}\quad\delta_k(x)=\frac1{\kappa_{n,s}}\,\big(\gamma_k(x)-1\big)=\frac{\Gamma(n-x+k)}{\Gamma(x+k)}-\frac{\Gamma(n-x)}{\Gamma(x)}
\end{multline*}
are such that
\[
\delta_k\big(\tfrac n{q_\star}\big)=\frac1{\kappa_{n,s}}\,\big(\gamma_k\big(\tfrac n{q_\star}\big)-1\big)\quad\mbox{where}\quad\kappa_{n,s}=\frac{\Gamma\big(\frac n{q_\star}\big)}{\Gamma\big(n-\frac n{q_\star}\big)}=\frac{\Gamma\big(\frac{n-s}2\big)}{\Gamma\big(\frac{n+s}2\big)}\,.
\]

We recall that $\gamma_0(n/q)-1=0$, $\gamma_1(n/q)-1=q-2$, $\delta_k\big(\tfrac n{q_\star}\big)=k\,(k+n-1)$ and $1/\kappa_{n,2}=\frac14\,n\,(n-2)$. According to~\eqref{gamma'}, we have that
\[
\alpha_k(x)=-\,\frac{\gamma_k'(x)}{\gamma_k(x)}=\sum_{j=0}^{k-1}\beta_j(x)\quad\mbox{with}\quad\beta_j(x)=\frac1{n+j-x}+\frac1{j+x}
\]
for any $k\ge1$. With these notations, the eigenvalues of $\mathcal K_s$, $\mathcal L_s$ and $\mathcal K_0'=\mathcal L_0'$ are respectively given by
\[
\gamma_k\big(\tfrac n{q_\star(s)}\big)=\gamma_k\big(\tfrac{n-s}2\big), \quad \frac 1{\kappa_{n,s}}\,\big(\gamma_k\big(\tfrac{n-s}2\big)-1\big), \quad \frac12\,\alpha_k\big(\tfrac n2\big)
\]
with
\[
\alpha_k\big(\tfrac n2\big)=-\,\gamma_k'\big(\tfrac n2\big)=4\sum_{j=0}^{k-1}\frac 1{n+2\,j}\,.
\]

Finally, we recall that $\mathcal K_s$, the fractional Laplacian $\mathcal L_s$ and the conformal fractional Laplacian $\mathcal A_s$ satisfy the relations
\[
\kappa_{n,s}\,\mathcal A_s=\mathcal K_s=\kappa_{n,s}\,\mathcal L_s+\,\mathrm{Id}\,.
\]

\par\medskip\noindent{\small{\bf Acknowledgements.} This work is supported by a public grant overseen by the French National Research Agency (ANR) as part of the ``Investissements d'Avenir'' program (A.Z., reference: ANR-10-LABX-0098, LabEx SMP) and by the projects \emph{STAB} (J.D., A.Z.) and \emph{Kibord} (J.D.) of the French National Research Agency (ANR). A.Z.~thanks the ERC Advanced Grant BLOWDISOL (Blow-up, dispersion and solitons; PI: Frank Merle) \# 291214 for support. The authors thank Maria J.~Esteban for fruitful discussions and suggestions, and are also grateful to Van Hoang Nguyen for pointing an error in a former version.
\par\smallskip\noindent\copyright~2016 by the authors. This paper may be reproduced, in its entirety, for non-commercial purposes.}


\begin{thebibliography}{10}

\bibitem{MR0450957}
{\sc R.~A. Adams}, {\em Sobolev spaces}, Academic Press [A subsidiary of
  Harcourt Brace Jovanovich, Publishers], New York-London, 1975.
\newblock Pure and Applied Mathematics, Vol. 65.

\bibitem{MR1842428}
{\sc A.~Arnold, P.~Markowich, G.~Toscani, and A.~Unterreiter}, {\em On convex
  {S}obolev inequalities and the rate of convergence to equilibrium for
  {F}okker-{P}lanck type equations}, Comm. Partial Differential Equations, 26
  (2001), pp.~43--100.

\bibitem{MR889476}
{\sc D.~Bakry and M.~{\'E}mery}, {\em Diffusions hypercontractives}, in
  S\'eminaire de probabilit\'es, XIX, 1983/84, vol.~1123 of Lecture Notes in
  Math., Springer, Berlin, 1985, pp.~177--206.

\bibitem{MR1164616}
{\sc W.~Beckner}, {\em Sobolev inequalities, the {P}oisson semigroup, and
  analysis on the sphere {$\mathbb S^n$}}, Proc. Nat. Acad. Sci. U.S.A., 89
  (1992), pp.~4816--4819.

\bibitem{MR1230930}
\leavevmode\vrule height 2pt depth -1.6pt width 23pt, {\em Sharp {S}obolev
  inequalities on the sphere and the {M}oser-{T}rudinger inequality}, Ann. of
  Math. (2), 138 (1993), pp.~213--242.

\bibitem{MR1441924}
\leavevmode\vrule height 2pt depth -1.6pt width 23pt, {\em Logarithmic
  {S}obolev inequalities and the existence of singular integrals}, Forum Math.,
  9 (1997), pp.~303--323.

\bibitem{MR0282313}
{\sc M.~Berger, P.~Gauduchon, and E.~Mazet}, {\em Le spectre d'une vari\'et\'e
  riemannienne}, Lecture Notes in Mathematics, Vol. 194, Springer-Verlag,
  Berlin, 1971.

\bibitem{BV-V}
{\sc M.-F. Bidaut-V{\'e}ron and L.~V{\'e}ron}, {\em Nonlinear elliptic
  equations on compact {R}iemannian manifolds and asymptotics of {E}mden
  equations}, Invent. Math., 106 (1991), pp.~489--539.

\bibitem{Caffarelli-Kohn-Nirenberg-84}
{\sc L.~Caffarelli, R.~Kohn, and L.~Nirenberg}, {\em First order interpolation
  inequalities with weights}, Compositio {M}ath., 53 (1984), pp.~259--275.

\bibitem{MR1143664}
{\sc E.~Carlen and M.~Loss}, {\em Competing symmetries, the logarithmic {HLS}
  inequality and {O}nofri's inequality on {$\mathbb S^n$}}, Geom. Funct. Anal.,
  2 (1992), pp.~90--104.

\bibitem{MR1038450}
{\sc E.~A. Carlen and M.~Loss}, {\em Extremals of functionals with competing
  symmetries}, J. Funct. Anal., 88 (1990), pp.~437--456.

\bibitem{MR3279352}
{\sc J.~A. Carrillo, Y.~Huang, M.~C. Santos, and J.~L. V{\'a}zquez}, {\em
  Exponential convergence towards stationary states for the 1{D} porous medium
  equation with fractional pressure}, J. Differential Equations, 258 (2015),
  pp.~736--763.

\bibitem{MR2081075}
{\sc D.~Chafa{\"{\i}}}, {\em Entropies, convexity, and functional inequalities:
  on {$\Phi$}-entropies and {$\Phi$}-{S}obolev inequalities}, J. Math. Kyoto
  Univ., 44 (2004), pp.~325--363.

\bibitem{chensymmetry}
{\sc L.~Chen, Z.~Liu, and G.~Lu}, {\em Symmetry and regularity of solutions to
  the weighted {H}ardy--{S}obolev type system}, Advanced Nonlinear Studies, 16
  (2016), pp.~1--13.

\bibitem{MR3179693}
{\sc S.~Chen, R.~L. Frank, and T.~Weth}, {\em Remainder terms in the fractional
  {S}obolev inequality}, Indiana Univ. Math. J., 62 (2013), pp.~1381--1397.

\bibitem{Chen-Yang16}
{\sc X.~Chen and J.~Yang}, {\em Weighted fractional {S}obolev inequality in
  {$\R^N$}}, Advanced Nonlinear Studies,  (2016).

\bibitem{christ2014sharpened}
{\sc M.~Christ}, {\em A sharpened {H}ausdorff-{Y}oung inequality},
  \arxiv{1406.1210},  (2014).

\bibitem{MR2869807}
{\sc P.~D'Ancona and R.~Luc{\`a}}, {\em Stein-{W}eiss and
  {C}affarelli-{K}ohn-{N}irenberg inequalities with angular integrability}, J.
  Math. Anal. Appl., 388 (2012), pp.~1061--1079.

\bibitem{delPino26062012}
{\sc M.~del Pino and J.~Dolbeault}, {\em The {E}uclidean {O}nofri inequality in
  higher dimensions}, International Mathematics Research Notices, 2013 (2012),
  pp.~3600--3611.

\bibitem{delPino20102045}
{\sc M.~del Pino, J.~Dolbeault, S.~Filippas, and A.~Tertikas}, {\em A
  logarithmic {H}ardy inequality}, Journal of Functional Analysis, 259 (2010),
  pp.~2045 -- 2072.

\bibitem{MR2381156}
{\sc J.~Demange}, {\em Improved {G}agliardo-{N}irenberg-{S}obolev inequalities
  on manifolds with positive curvature}, J. Funct. Anal., 254 (2008),
  pp.~593--611.

\bibitem{MR2944369}
{\sc E.~Di~Nezza, G.~Palatucci, and E.~Valdinoci}, {\em Hitchhiker's guide to
  the fractional {S}obolev spaces}, Bull. Sci. Math., 136 (2012), pp.~521--573.

\bibitem{1101}
{\sc J.~Dolbeault}, {\em {S}obolev and {H}ardy-{L}ittlewood-{S}obolev
  inequalities: duality and fast diffusion}, Math. Res. Lett., 18 (2011),
  pp.~1037--1050.

\bibitem{MR3229793}
{\sc J.~Dolbeault, M.~J. Esteban, and M.~Loss}, {\em Nonlinear flows and
  rigidity results on compact manifolds}, J. Funct. Anal., 267 (2014),
  pp.~1338--1363.

\bibitem{dolbeault:hal-01206975}
\leavevmode\vrule height 2pt depth -1.6pt width 23pt, {\em {Interpolation
  inequalities on the sphere: linear vs. nonlinear flows}}, {Annales de la
  Facult{\'e} des Sciences de Toulouse. Math{\'e}matiques. S{\'e}rie 6},
  \arxiv{1509.09099},  (2016).

\bibitem{DET}
{\sc J.~Dolbeault, M.~J. Esteban, and G.~Tarantello}, {\em Optimal {S}obolev
  inequalities on {$\R^N$} and {$\S^n$}: a direct approach using the
  stereographic projection}.
\newblock Unpublished.

\bibitem{MR3227280}
{\sc J.~Dolbeault and G.~Jankowiak}, {\em Sobolev and
  {H}ardy--{L}ittlewood--{S}obolev inequalities}, J. Differential Equations,
  257 (2014), pp.~1689--1720.

\bibitem{MR2448650}
{\sc J.~Dolbeault, B.~Nazaret, and G.~Savar{\'e}}, {\em A new class of
  transport distances between measures}, Calc. Var. Partial Differential
  Equations, 34 (2009), pp.~193--231.

\bibitem{Dolbeault2013917}
{\sc J.~Dolbeault and G.~Toscani}, {\em Improved interpolation inequalities,
  relative entropy and fast diffusion equations}, Annales de l'Institut Henri
  Poincar{\'e} (C) Non Linear Analysis, 30 (2013), pp.~917 -- 934.

\bibitem{Dolbeault15052015}
{\sc J.~Dolbeault and G.~Toscani}, {\em Stability results for logarithmic
  {S}obolev and {G}agliardo--{N}irenberg inequalities}, International
  Mathematics Research Notices,  (2015).

\bibitem{MR2659680}
{\sc R.~L. Frank and E.~H. Lieb}, {\em Inversion positivity and the sharp
  {H}ardy-{L}ittlewood-{S}obolev inequality}, Calc. Var. Partial Differential
  Equations, 39 (2010), pp.~85--99.

\bibitem{MR2858468}
\leavevmode\vrule height 2pt depth -1.6pt width 23pt, {\em Spherical reflection
  positivity and the {H}ardy-{L}ittlewood-{S}obolev inequality}, in
  Concentration, functional inequalities and isoperimetry, vol.~545 of Contemp.
  Math., Amer. Math. Soc., Providence, RI, 2011, pp.~89--102.

\bibitem{MR2848628}
\leavevmode\vrule height 2pt depth -1.6pt width 23pt, {\em A new,
  rearrangement-free proof of the sharp {H}ardy-{L}ittlewood-{S}obolev
  inequality}, in Spectral theory, function spaces and inequalities, vol.~219
  of Oper. Theory Adv. Appl., Birkh\"auser/Springer Basel AG, Basel, 2012,
  pp.~55--67.

\bibitem{MR3366777}
{\sc N.~Ghoussoub and S.~Shakerian}, {\em Borderline variational problems
  involving fractional {L}aplacians and critical singularities}, Adv. Nonlinear
  Stud., 15 (2015), pp.~527--555.

\bibitem{MR3243734}
{\sc L.~Grafakos}, {\em Classical {F}ourier analysis}, vol.~249 of Graduate
  Texts in Mathematics, Springer, New York, third~ed., 2014.

\bibitem{2014arXiv1404.1028J}
{\sc G.~{Jankowiak} and V.~{Hoang Nguyen}}, {\em Fractional {S}obolev and
  {H}ardy-{L}ittlewood-{S}obolev inequalities}, \arxiv{1404.1028},  (2014).

\bibitem{MR3276166}
{\sc T.~Jin and J.~Xiong}, {\em A fractional {Y}amabe flow and some
  applications}, J. Reine Angew. Math., 696 (2014), pp.~187--223.

\bibitem{MR717827}
{\sc E.~H. Lieb}, {\em Sharp constants in the {H}ardy-{L}ittlewood-{S}obolev
  and related inequalities}, Ann. of Math. (2), 118 (1983), pp.~349--374.

\bibitem{MR1817225}
{\sc E.~H. Lieb and M.~Loss}, {\em Analysis}, vol.~14 of Graduate Studies in
  Mathematics, American Mathematical Society, Providence, RI, second~ed., 2001.

\bibitem{MR3429269}
{\sc H.~Liu and A.~Zhang}, {\em Remainder terms for several inequalities on
  some groups of {H}eisenberg-type}, Sci. China Math., 58 (2015),
  pp.~2565--2580.

\bibitem{MR3445279}
{\sc G.~Molica~Bisci, V.~D. Radulescu, and R.~Servadei}, {\em Variational
  methods for nonlocal fractional problems}, vol.~162 of Encyclopedia of
  Mathematics and its Applications, Cambridge University Press, Cambridge,
  2016.
\newblock With a foreword by Jean Mawhin.

\bibitem{MR0199449}
{\sc C.~M{\"u}ller}, {\em Spherical harmonics}, vol.~17 of Lecture Notes in
  Mathematics, Springer-Verlag, Berlin-New York, 1966.

\bibitem{MR1349110}
{\sc G.~N. Watson}, {\em A treatise on the theory of {B}essel functions},
  Cambridge Mathematical Library, Cambridge University Press, Cambridge, 1995.
\newblock Reprint of the second (1944) edition.

\bibitem{MR3334182}
{\sc J.~Yang}, {\em Fractional {S}obolev-{H}ardy inequality in
  {$\mathbb{R}^N$}}, Nonlinear Anal., 119 (2015), pp.~179--185.

\end{thebibliography}
\end{document}